\makeatletter \@addtoreset{equation}{section} \makeatother
\renewcommand\thefigure{\thesection.\@arabic\c@figure}
\renewcommand\thetable{\thesection.\@arabic\c@table}
\newtheorem{theorem}{Theorem}[section]
\newtheorem{lemma}[theorem]{Lemma}
\newtheorem{proposition}[theorem]{Proposition}
\newtheorem{corollary}[theorem]{Corollary}
\newtheorem{definition}[theorem]{Definition}
\theoremstyle{remark}
\newtheorem{remark}[theorem]{Remark}
\newcommand{\mc}[1]{{\mathscr #1}}
\newcommand{\mf}[1]{{\mathfrak #1}}
\newcommand{\bb}[1]{{\mathbb #1}}
\newcommand{\<}{\langle}
\renewcommand{\>}{\rangle}
\newcommand{\eps}{\varepsilon}
\renewcommand{\>}{\rangle}
\newcommand{\pfrac}[2]{\genfrac{}{}{}{1}{#1}{#2}}
\newcommand{\at}[2]{\genfrac{}{}{0pt}{}{#1}{#2}}
\newcommand{\dl}{\<\!\<}
\newcommand{\dr}{\>\!\>}
\keywords{Equilibrium, fluctuations, phase transition, slowed exclusion}
\date{}
\begin{document}

\title[Occupation times of exclusion processes with conductances]{Occupation times of exclusion processes\\ with conductances}

\author{Tertuliano Franco}
\address{UFBA\\
 Instituto de Matem\'atica, Campus de Ondina, Av. Adhemar de Barros, S/N. CEP 40170-110\\
Salvador, Brasil}
\curraddr{}
\email{tertu@impa.br}

\author{Patr\'{\i}cia Gon\c{c}alves}
\address{ PUC-RIO, Departamento de Matem\'atica, Rua Marqu\^es de S\~ao Vicente, no. 225, 22453-900, G\'avea, Rio de Janeiro, Brazil \\and\\
CMAT, Centro de Matem\'atica da Universidade do Minho, Campus de Gualtar, 4710-057 Braga, Portugal.}
\email{patricia@mat.puc-rio.br}

\author{Adriana Neumann}
\address{UFRGS, Instituto de Matem\'atica, Campus do Vale, Av. Bento Gon\c calves, 9500. CEP 91509-900, Porto Alegre, Brasil}
\curraddr{}
\email{aneumann@impa.br}

\subjclass[2010]{60K35,26A24,35K55}
\begin{abstract}
We obtain the fluctuations for the occupation time of one-dimensional symmetric exclusion processes with speed change, where the transition rates ({\em conductances}) are driven by a general function $W$. The approach does not require sharp bounds on the spectral gap of the system nor the jump rates to be bounded from above or below.  We present some examples and for one of them, we observe that the fluctuations of the current are trivial, but the fluctuations of the occupation time are given by a fractional Brownian Motion. This shows that, in general, the fluctuations of the current and of the occupation time are not of same order.
\end{abstract}

\maketitle

\section{Introduction}

\emph{Occupation time} is the usual nomenclature for the additive functional $\int_0^t\eta_s(x)ds$, where $\eta_s(x)$ denotes the occupation variable at the site $x$ at the time $s$. Namely, $\eta_s(x)$ represents how many particles stand at the site $x$ and at the time $s$ for some particle system $\{\eta_t\,: \,t\geq 0\}$. In this paper, we are concerned with a standard interacting particle system, the \emph{exclusion process}. Succinctly, the exclusion process consists in a system of random walks evolving on a lattice under the rule that a particle can not jump to an already occupied site. This is the so-called \emph{exclusion rule}. Such model is of great importance in Probability and Statistical Mechanics for several reasons. At the same time it has a simple interaction among particles but its peculiarities allow to prove deep results  which are shared by many other models.

We consider here one-dimensional speed change exclusion processes. The dynamics of these process can be informally described as follows. To each bond of the lattice it is associated a Poisson clock, whose parameter is a function of the position of the bond \cite{fgn,fgn2,fgn3,fl}. The system is taken to start from the equilibrium state, which consists in a Bernoulli product measure with constant parameter.
Our main result is the derivation of a functional central limit for the occupation time, when suitably re-scaled.

There is a vast literature on the fluctuations of the occupation time of symmetric particle systems, see for instance \cite{GJ,S,SX} and references therein. In this paper we follow the approach proposed in \cite{GJ}, which consists in replacing the occupation time functional by an additive functional of the density of particles. Then, as a consequence of the Central Limit Theorem for the density of particles, we deduce the corresponding result for the occupation time functional. We consider exclusion processes with speed change for which the Central Limit Theorem for the density of particles has been derived \cite{fsv}. Therefore, to complete our goals we just need to justify the proper replacement of the aforementioned functionals. For that purpose, we introduce what we call a {\em Local Replacement} which allows to substitute the occupation time functional by an additive functional of the empirical average of particles on a small macroscopic box. This Local Replacement avoids performing a multi-scale analysis in
order to derive a second order Boltzmann Gibbs Principle as in  \cite{GJ}. More than that, we do not require sharp bounds on the spectral gap, nor the boundedness of the jump rates of the system, as required in \cite{GJ}. Therefore, our results are true for a general class of exclusion processes, for which the methods of \cite{GJ} do not apply directly. On the other hand,  our results are not as general as the results of \cite{GJ}, since they only hold for the occupation time functional and no other additive functional. We believe that our method can be extended to more general dynamics than the exclusion constraint, but this is left for future work.

We present here some particular cases of interest. First, we consider porous media models which were analyzed in \cite{glt} and correspond to taking $W$ as the identity function. These models do not satisfy the spectral gap bound required in \cite{GJ} but with our method we obtain the fractional Brownian Motion ruling the fluctuations of the occupation time. Second, we consider \emph{exclusion processes with a slow bond} which were analyzed in \cite{fgn3}. These models do not satisfy the boundedness of the jump rates as required in \cite{GJ}, but our method also fits these models. We remark that \emph{exclusion processes with a slow bond} is an interesting example of a particle system for which the fluctuations of the current and the fluctuations of the occupation time have completely different behaviors. This shows that, in general, the fluctuations of the current and of the occupation time are not of same type.

This paper has the following outline.
In Section 2, we define our models and we state our main result, namely Theorem \ref{th2.1}. In Section 3, we recall the hydrodynamic limit and the fluctuations of the  density from \cite{fl} and \cite{fsv}, respectively. In Section 4, we prove our main result. Section 5 is devoted to examples: porous media models and exclusion processes with a slow bond. In the Appendix we present some technical lemmas.

\section{The main result}\label{s2}
Denote by $\bb T = \bb R/\bb Z = [0, 1)$ the one-dimensional continuous torus, and by $\bb T_n = \bb Z/n\bb Z =\{0,\ldots, n - 1\}$ the one-dimensional discrete torus with $n$ points.

Fix $W : \bb R\to \bb R$ a strictly increasing right continuous function with left limits
(c\`adl\`ag), periodic in the sense that, for all $u\in \bb R$,
\begin{equation}\label{periodic}
W(u + 1) - W(u) = W(1) - W(0).
\end{equation}
Consider the state space $\Omega_n:=\{0,1\}^{\mathbb{T}_n}$.
The \emph{speed change exclusion process with conductances} is the Markov process $\{\eta_t:t\geq{0}\}$ whose infinitesimal generator acts on local functions $f:\Omega_n\rightarrow{\mathbb{R}}$ as
\begin{equation}\label{ln}
(\mc{L}_{n}f)(\eta)=\sum_{x\in \bb T_n}\,\xi^{n}_{x,x+1}\,c_{x,x+1}(\eta)\,[f(\eta^{x,x+1})-f(\eta)]\,,
\end{equation}
where
 $\eta^{x,x+1}$ is the configuration obtained from $\eta$ by exchanging the occupation variables $\eta(x)$ and $\eta(x+1)$:
\begin{equation}\label{exchange}
(\eta^{x,x+1})(y)=\left\{\begin{array}{cl}
\eta(x+1),& \mbox{if}\,\,\, y=x\,,\\
\eta(x),& \mbox{if} \,\,\,y=x+1\,,\\
\eta(y),& \mbox{otherwise,}
\end{array}
\right.
\end{equation}
the conductantes $\xi^n_{x,x+1}$ are given by
\begin{equation*}
\xi^n_{x,x+1}\;=\;\frac{1}{n\big(W\big(\pfrac{x+1}{n}\big)-W\big(\pfrac{x}{n}\big)\big)}
\end{equation*}
and
\begin{equation*}
c_{x,x+1}(\eta)\;=\;1+b(\eta(x-1)+\eta(x+2)),
\end{equation*}
with $b>-1/2$.

Throughout this paper, we assume the following technical condition on the function $W$: there exists a constant $\theta>0$ such that
\begin{equation}\label{Wassumption}
\frac{1}{\varepsilon n}\sum_{y=0}^{\varepsilon n-1}\big(W\big(\pfrac{y}{n}\big)-W(0)\big)\sim O(\varepsilon^{\theta})\,,
\end{equation}
where  $f \sim O(g)$ means that the function $f$ is bounded from above by a constant times the function $g$.

To exemplify the assumption \eqref{Wassumption}, if $W$ is a $\theta$-H\"older function in a neighborhood of zero, then \eqref{Wassumption} is satisfied, since
\begin{equation*}
\frac{1}{\varepsilon n}\sum_{y=0}^{\varepsilon n-1}\big(W\big(\pfrac{y}{n}\big)-W(0)\big)\leq\frac{C_W}{\varepsilon n}\sum_{y=0}^{\varepsilon n-1}\frac{y^\theta}{n^\theta}\leq\frac{C_W}{\varepsilon n}\sum_{y=0}^{\varepsilon n-1}\varepsilon^\theta=C_W\varepsilon^\theta\,,
\end{equation*}
where $C_W$ is the H\"older constant.

 The dynamics of the process $\{\eta_t:t\geq 0\}$ can be informally described as follows. At each bond $\{x,x+1\}$ of $\bb T_n$, there is an exponential clock of parameter $\xi^n_{x,x+1}$, all of them being independent. Suppose the configuration at the present is $\eta$. After a ring of the clock at the  bond $\{x,x+1\}$, the occupation variables $\eta(x)$ and $\eta(x+1)$ are exchanged at rate $c_{x,x+1}(\eta)$. 

We remark that the condition $b>-1/2$ is required to ensure that the system is ergodic in the following sense. First, we notice that the dynamics introduced above conserves the total number of particles. Therefore, the state space of the process can be written as $\Omega_n:=\bigcup_{k=0}^n\mc{H}_{n,k}$, where $\mc{H}_{n,k}$ denotes the hyperplane of configurations in $\Omega_n$ with $k$ particles. The ergodicity property means that on each hyperplane, with positive probability, we can reach any configuration in the same hyperplane using the allowed jumps of the dynamics. For instance, if $b=-1/2$ and for a configuration $\eta$ having the sites $x-1$, $x$,  $x+2$ occupied, and the site $x+1$ empty, then $c_{x,x+1}(\eta)=1+2b=0$. Then, for this choice of $b$ there are blocked configurations, that is, configurations that do not evolve under the dynamics. Therefore, the system in not ergodic.

Also, it is well known that the Bernoulli product measures on $\Omega_n$ with parameter $\rho\in{[0,1]}$, denoted by
$\{\nu_\rho : 0\le \rho \le 1\}$, are invariant for the dynamics introduced above.   Moreover,  they  are also
reversible.

 Fix $T>0$. The trajectories of $\{\eta_t : t\ge 0\}$ live on the space $\mc D([0,T], \Omega_n)$, that is, the path space of
c\`adl\`ag trajectories with values in $\Omega_n$. For a
measure $\nu_\rho$ on $\Omega_n$, we denote by $\bb P_{\nu_\rho}$ the
probability measure on $\mc D([0,T], \Omega_n)$ induced by $\nu_\rho$ and by $\{\eta_t : t\ge 0\}$ and we denote by  $\bb E_{\rho}$
the expectation with respect to $\bb P_{\nu_\rho}$.

Let $\mc{C}([0,T],\mathbb{R})$  be the path space of continuous trajectories with values in $\bb R$.

 \begin{definition}
The occupation time of the origin is defined as the additive functional
\begin{equation}\label{ot}
\Gamma_n(t):=\frac{1}{n^{3/2}}\int_{0}^{tn^2}(\eta_{s}(0)-\rho)\,ds .
\end{equation}
\end{definition}
\noindent The definition above has already the correct scaling in terms of $n$, in order to $\Gamma_n(t)$ have a non trivial limit when taking $n$ to infinity.
The occupation time at a site $x\in\mathbb{T}_n$ is defined as above by replacing $\eta_{s}(0)$ by $\eta_{s}(x)$.

Our main result is the following
\begin{theorem}\label{th2.1}(Fluctuations of the occupation time)\\
As $n$ goes to infinity, the sequence of processes $\{\Gamma_n(t):t\in{[0,T]}\}_{n\in{\mathbb{N}}}$ converges in distribution, with respect to the uniform topology of $\mc{C}([0,T],\mathbb{R})$, to a Gaussian process $\{\Gamma(t): t\in{[0,T]}\}$.
\end{theorem}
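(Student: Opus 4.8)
The plan is to follow the strategy of \cite{GJ}, reducing the occupation time functional to an additive functional of the empirical density, and then to invoke the already-established central limit theorem for the density of particles from \cite{fsv}. The key point is that, by the invariance and reversibility of $\nu_\rho$, the process is stationary, so the additive functional $\Gamma_n(t)$ has stationary increments. This already suggests that the limit, if it exists, must be a Gaussian process with stationary increments, and the natural candidate is a suitable (fractional) Brownian motion. To promote this to a genuine functional central limit theorem, I would establish two ingredients: finite-dimensional convergence to a Gaussian law, and tightness in $\mc C([0,T],\bb R)$ with respect to the uniform topology.

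First I would set up what the paper calls the \emph{Local Replacement}. The idea is to replace $\eta_s(0)-\rho$ inside the time integral by the centered empirical average $\frac{1}{\eps n}\sum_{y=0}^{\eps n-1}(\eta_s(y)-\rho)$ of particles over a small macroscopic box of size $\eps n$, paying an $L^2$ error that can be controlled uniformly in time. Concretely, for a stationary reversible process one can bound the time integral of a mean-zero local function via the $H_{-1}$ norm (the variational formula involving the Dirichlet form of $\mc L_n$), so I would show
\begin{equation*}
\limsup_{n\to\infty}\bb E_\rho\Big[\Big(\frac{1}{n^{3/2}}\int_0^{tn^2}\big[(\eta_s(0)-\rho)-\tfrac{1}{\eps n}\textstyle\sum_{y=0}^{\eps n-1}(\eta_s(y)-\rho)\big]\,ds\Big)^2\Big]
\end{equation*}
vanishes as $\eps\to 0$. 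This is precisely where the technical assumption \eqref{Wassumption} enters: the decay rate $O(\eps^\theta)$ quantifies how the conductances governed by $W$ allow the box average to approximate the occupation at the origin, and it is what replaces the sharp spectral gap estimates and jump-rate bounds required in \cite{GJ}. Once the replacement is in force, $\Gamma_n(t)$ is, up to a negligible error, a linear functional of the density fluctuation field evaluated against an approximate indicator of the box, so its convergence follows from the density fluctuation result of \cite{fsv}.

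For the finite-dimensional distributions I would therefore first prove convergence for the box-averaged functional for each fixed $\eps$ by writing it as the fluctuation field tested against a fixed test function and applying \cite{fsv}, then send $\eps\to 0$ and use the Local Replacement to transfer the conclusion to $\Gamma_n$ itself; a standard Cauchy/tightness-in-$\eps$ argument identifies the limiting Gaussian variance. Gaussianity of every finite-dimensional marginal, together with stationarity of increments, identifies the limit $\{\Gamma(t)\}$ as a centered Gaussian process, and the explicit covariance is read off from the variance structure of the density field. For tightness in the uniform topology I would use the stationarity of increments and the Gaussian moment bounds to verify a Kolmogorov-type criterion, estimating $\bb E_\rho[(\Gamma_n(t)-\Gamma_n(s))^{2}]$ by $C|t-s|^{\gamma}$ for an exponent $\gamma$ determined by $\theta$; since Gaussian increments control all higher moments, this upgrades to the fourth-moment bound needed for tightness of continuous processes.

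The main obstacle I expect is the Local Replacement estimate, i.e. controlling the $H_{-1}$ (variational) norm of the difference between the occupation at the origin and the box average \emph{without} the spectral gap or jump-rate bounds that \cite{GJ} relied on. The delicate feature is that the conductances $\xi^n_{x,x+1}$ can be arbitrarily small (e.g. at a slow bond) or arbitrarily large (e.g. for the porous medium $W=\mathrm{id}$), so the Dirichlet form is badly behaved and one cannot uniformly bound the local martingale errors in the usual way. The resolution is to exploit the precise structure of $W$ through \eqref{Wassumption}: the weighted telescoping sum $W(y/n)-W(0)$ appearing in the box average is exactly what makes the variational problem tractable, and the assumption guarantees the requisite $\eps^\theta$ decay. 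Verifying this carefully, and checking that the resulting error is integrable in time after the diffusive rescaling $tn^2$ and the normalization $n^{-3/2}$, is the technical heart of the argument.
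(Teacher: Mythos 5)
Your overall strategy coincides with the paper's: a Kipnis--Varadhan variational ($H_{-1}$) estimate for the Local Replacement, with assumption \eqref{Wassumption} supplying the $\eps^\theta$ decay in place of spectral gap or rate bounds; then convergence of the box functional via \cite{fsv}; then a Cauchy-in-$\eps$ argument plus a Kolmogorov-type bound. However, there are two concrete gaps.

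First, and most importantly, you propose to treat $\iota_\eps$ as ``a fixed test function'' and apply the density fluctuation theorem of \cite{fsv} to $\mc Y^n_s(\iota_\eps)$. This step fails as stated: $\iota_\eps=\eps^{-1}\mathbf{1}_{[0,\eps]}$ does \emph{not} belong to the test space $S_W(\bb T)$, which is built from $\mf D_W$ (functions of the form $a+bW(u)+\int_{(0,u]}(\int_0^y g)\,W(dy)$, i.e.\ adapted to the conductance function $W$) and completed in the $W$-dependent norms \eqref{inner_product}. Consequently the convergence $\mc Y^n_t(\iota_\eps)\to\mc Y_t(\iota_\eps)$ is not given by \cite{fsv}, and indeed the limiting object $\tilde\Gamma_\eps(t)=\int_0^t\mc Y_s(\iota_\eps)\,ds$ is a priori not even defined, since $\mc Y$ is only an $S_W'(\bb T)$-valued process. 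The paper needs Lemma \ref{iota} -- an explicit, genuinely $W$-dependent construction of approximations $\iota_\eps^k\in\mf D_W$ (double integrals against $W(dy)$ of suitably normalized step functions) converging to $\iota_\eps$ in $L^2(\bb T)$ -- together with Remark \ref{remark4.1} (a Cauchy-sequence/diagonal argument giving meaning to $\tilde\Gamma_\eps$) and the covariance identity of Lemma \ref{Wcovariance} to transfer the convergence from $\iota^k_\eps$ to $\iota_\eps$ (this is the content of the proof of Lemma \ref{bound}). This two-level approximation (first $k\to\infty$, then $\eps\to 0$) is a missing idea in your proposal, and in the conductance setting it is not a routine mollification.

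Second, your tightness argument invokes ``Gaussian moment bounds'' to upgrade the second-moment estimate on $\Gamma_n(t)-\Gamma_n(s)$ to a fourth-moment bound. For finite $n$ the increments of $\Gamma_n$ are additive functionals of a Markov jump process and are \emph{not} Gaussian, so this upgrade is unjustified. Fortunately it is also unnecessary: a uniform-in-$n$ bound $\bb E_\rho[(\Gamma_n(t)-\Gamma_n(s))^2]\le C|t-s|^{(1+2\theta)/(1+\theta)}$, whose exponent exceeds $1$, already suffices for the Kolmogorov--Centsov criterion (Problem 2.4.11 of \cite{k}, exactly as the paper uses it, there at the level of the processes $\tilde\Gamma_\eps$ indexed by $\eps$ rather than of $\Gamma_n$ indexed by $n$). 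Be aware, though, that this superlinear exponent does not come from the Local Replacement alone, which only yields an error linear in $t$ (namely $Ct\eps^\theta$, Proposition \ref{Wproposition}); one must combine it with the crude Cauchy--Schwarz variance bound $t^2\chi(\rho)/\eps$ for the box functional and optimize $\eps=t^{1/(1+\theta)}$ (treating small $t$ separately). Your phrase ``an exponent $\gamma$ determined by $\theta$'' suggests you anticipate this, but the optimization step is where the bound actually comes from and should be made explicit.
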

\begin{remark}
We notice that the previous result also holds for the occupation time of any site $x\in{\mathbb{T}_n}$, by replacing condition \eqref{Wassumption} by
\begin{equation}\label{Wassumption1}
\frac{1}{\varepsilon n}\sum_{y=x}^{x+\varepsilon n-1}\big(W\big(\pfrac{y}{n}\big)-W\big(\pfrac{x}{n}\big)\big)\sim O(\varepsilon^{\theta}).
\end{equation}
For ease of notation we opt to present the result for $x=0$.
\end{remark}

\section{Scaling limits: hydrodynamics and fluctuations}
In this section we review the hydrodynamic limit and the equilibrium fluctuations of the density, for the models introduced above.

\subsection{Hydrodynamic Limit.}

In words, the hydrodynamic limit consists in the analysis of the time evolution of the spatial density of particles. This spatial density of particles is represented by the empirical measure process
$\pi^{n}_t(\eta,du):=\pi^n(\eta_t,du)$
defined, for $t\in{[0,T]}$, by
\begin{equation*}
\pi^{n}(\eta_t,du) \;=\; \pfrac{1}{n} \sum _{x\in \bb T_n} \eta_{tn^2} (x)
\delta_{\frac{x}{n}}(du)\;\in\; \mc M,
\end{equation*}
where $\delta_y$ is the Dirac measure concentrated on $y\in \bb T$. Above, $\mc M$ denotes the space of positive measures on $\bb T$ with total
mass bounded by one, endowed with the weak topology.
To  uniquely characterize the time evolution of the empirical measure, some condition must be imposed on the starting measures. This is the content of next definition.

\begin{definition} \label{def associated measures}
A sequence of probability measures $\{\mu_n  \}_{n\in \bb N}$, where $\mu_n$ is a probability  measure on $\Omega_n$, is
said to be associated to a profile $\psi_0 :\bb T \to [0,1]$ if for every $\delta>0$ and every continuous function $H: \bb T \to \bb R$
\begin{equation}\label{associated}
\lim_{n\to\infty}
\mu_n \Big\{ \eta\in \Omega_n\;:\; \Big\vert \pfrac 1n \sum_{x\in\bb T_n} H(\pfrac{x}{n})\, \eta(x)
- \int_{\bb{T}} H(u)\, \psi_0(u) du \Big\vert > \delta \Big\} \;=\; 0.
\end{equation}
\end{definition}

In \cite{fl} it was proved that:

 \begin{theorem}\label{t1}

Fix a continuous profile $\psi_0 : \bb T \to [0,1]$.  Let $\{\mu_n\}_{n\geq{1}}$ be
a sequence of probability measures   associated to $\psi_0$.
 Then, for any $t\in [0,T]$, for every $\delta>0$ and every continuous function $H: \bb T \to \bb R$, it holds that
\begin{equation*}
\lim_{n\to\infty}
\bb P_{\mu_n} \Big\{\eta_. : \, \Big\vert \pfrac{1}{n} \sum_{x\in\bb{T}_n}
H(\pfrac{x}{n})\, \eta_t(x) - \int_{\bb T} H(u)\, \psi(t,u) du \Big\vert
> \delta \Big\} \;=\; 0\,,
\end{equation*}
 where $\psi : [0,T] \times \bb T \to \bb R$
 is the unique weak solution of
\begin{equation}\label{edp2}
\left\{
\begin{array}{l}
{\displaystyle \partial_t \psi\; =\; \mf L_W \psi }\,, \\
{\displaystyle \psi(0,u) \;=\; \psi_0(u)}\,, \forall u\in{\mathbb{T}_n}\,.
\end{array}
\right.
\end{equation}
The operator $\mf L_W $ is defined in next subsection, as well the notion of weak solution of \eqref{edp2}.
\end{theorem}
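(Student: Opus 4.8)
The plan is to prove Theorem~\ref{t1} by the entropy (martingale) method for gradient systems, adapted to the generalized second-order operator $\mf L_W=\frac{d}{dx}\frac{d}{dW}$. Fix a test function $H$ in the domain of $\mf L_W$ (the $W$-Sobolev space introduced in the next subsection) and consider the Dynkin martingale
\[
M^n_t(H)\;=\;\<\pi^n_t,H\>-\<\pi^n_0,H\>-\int_0^t n^2\,\mc L_n\<\pi^n_s,H\>\,ds\,.
\]
A direct computation of $n^2\mc L_n$ acting on $\frac1n\sum_{x}H(\frac xn)\eta(x)$, followed by a summation by parts that uses the explicit form of the conductances $\xi^n_{x,x+1}$, shows that the instantaneous current across the bond $\{x,x+1\}$ is of gradient type: the speed-change factor is tailored so that $c_{x,x+1}(\eta)\big(\eta(x)-\eta(x+1)\big)$ equals a discrete gradient $\tau_x h-\tau_{x+1}h$ of a fixed local function $h$. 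A second summation by parts then rewrites the drift as $\frac1n\sum_x (\mf L^n_W H)(\frac xn)\,\tau_x h(\eta)$, where $\mf L^n_W$ is the natural discretization of $\mf L_W$ built from the $\xi^n_{x,x+1}$.

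The core step is a replacement lemma of Boltzmann--Gibbs type that substitutes the local function $\tau_x h(\eta)$ by a function of the mesoscopic empirical density $\bar\eta^{\eps n}(x)=\frac1{\eps n}\sum_{|y-x|<\eps n}\eta(y)$, dictated by local equilibrium with respect to the reversible measures $\nu_\rho$; this closes the drift into a functional of the density alone. I would prove it through the entropy inequality together with bounds on the $W$-weighted Dirichlet form relative to $\nu_\rho$, implemented by the classical one-block and two-block estimates. This is exactly where the method is robust: the law of large numbers needs only these local-equilibrium estimates and not a sharp control of the spectral gap, while the possibly very small or very large conductances $\xi^n_{x,x+1}$ produced by the jumps of $W$ are absorbed by the assumption on $W$ and by the $W$-weighting of the Dirichlet form.

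In parallel I would prove tightness of $\{\pi^n\}$ in $\mc D([0,T],\mc M)$. Since $\mc M$ is compact it suffices to check tightness of the real processes $\{\<\pi^n_\cdot,H\>\}$ for $H$ in a dense class, which follows from the martingale decomposition via Aldous' criterion: the drift is controlled by the computation above, and the predictable quadratic variation
\[
\<M^n(H)\>_t\;=\;\int_0^t n^2\Big\{\mc L_n\<\pi^n_s,H\>^2-2\<\pi^n_s,H\>\,\mc L_n\<\pi^n_s,H\>\Big\}\,ds
\]
is of order $1/n$ uniformly in $s$, so the martingale part vanishes in $L^2(\bb P_{\mu_n})$ as $n\to\infty$.

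It remains to characterize the limit points. Combining tightness, the vanishing of $M^n(H)$, and the replacement lemma, every limit point is concentrated on trajectories $\pi_t(du)=\psi(t,u)\,du$ with density taking values in $[0,1]$, for which passing to the limit in Dynkin's formula gives, for every admissible $H$, the identity $\<\psi_t,H\>-\<\psi_0,H\>=\int_0^t\<\psi_s,\mf L_W H\>\,ds$ with initial datum $\psi_0$; this is precisely the weak formulation of \eqref{edp2} recalled below. The argument then closes by invoking uniqueness of such weak solutions, obtained from an energy estimate in the $W$-Sobolev space. I expect the main obstacle to be the replacement step under an irregular $W$ with degenerate conductances, where the two-block estimate must be carried out with the $W$-dependent Dirichlet form and the correct local-equilibrium average must be shown to emerge without good spectral-gap bounds; a secondary difficulty is the uniqueness of weak solutions for the nonstandard operator $\mf L_W$, for which the $W$-Sobolev space and its embedding properties are essential.
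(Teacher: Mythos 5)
First, a point of reference: the paper does not prove Theorem \ref{t1} at all. It is imported verbatim from \cite{fl} (``In \cite{fl} it was proved that\ldots''), and the present paper only uses it, together with Theorem \ref{thm_fl}, as input for the fluctuation analysis. So the comparison has to be with the proof in the cited reference, and on that score your sketch is essentially the route taken there: martingale decomposition of $\langle\pi^n_t,H\rangle$, exploitation of the gradient structure $c_{x,x+1}(\eta)(\eta(x)-\eta(x+1))=\tau_x h-\tau_{x+1}h$ followed by a double summation by parts producing the discretization $\mf L^n_W$, a replacement lemma proved by one-block/two-block estimates with the $W$-weighted Dirichlet form, tightness via Aldous' criterion with the quadratic variation of order $1/n$ (your estimate is correct because $H\in\mf D_W$ varies like $W$ and $W$ has finite total variation on $\bb T$), and uniqueness of weak solutions through the $W$-Sobolev space $\mc H^1_W$ and the spectral theory of Theorem \ref{thm_fl}. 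You also correctly locate the genuine difficulty: the two-block estimate across bonds where $W$ jumps (there $\xi^n_{x,x+1}$ is small), which is handled in \cite{fl} by the fact that $W$ has only finitely many jumps above any fixed threshold, so the bad bonds contribute negligibly to the spatial average. One small correction of emphasis: assumption \eqref{Wassumption} plays no role in Theorem \ref{t1}; it is needed only for the occupation-time replacement in Section 4.

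There is, however, one substantive inconsistency in your closing step. For $b\neq 0$ the local function $h$ in the gradient decomposition has $E_{\nu_\rho}[h]=\rho+b\rho^2=:\Phi(\rho)$, so the replacement lemma closes the drift as $\int_0^t\langle\Phi(\psi_s),\mf L_W H\rangle\,ds$, i.e.\ limit points satisfy the weak formulation of $\partial_t\psi=\mf L_W\Phi(\psi)$ — which is what \cite{fl} actually proves — and not the linear identity $\int_0^t\langle\psi_s,\mf L_W H\rangle\,ds$ that you wrote; the linear equation emerges only for $b=0$. Your linear conclusion happens to match the transcription in the present paper (Theorem \ref{t1} and Definition \ref{def weak solution edp2} are stated linearly), but that transcription sits oddly with the factor $\tilde c\,'(\rho)=1+2b\rho=\Phi'(\rho)$ appearing in the fluctuation Theorem \ref{t2}, which is precisely the linearization of $\Phi$ at the equilibrium density. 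To make your argument self-consistent for general $b>-1/2$, carry $\Phi$ through the characterization of limit points and prove uniqueness for the nonlinear equation; there the condition $b>-1/2$ is exactly what makes $\Phi$ strictly increasing on $[0,1]$, which is what the energy/monotonicity argument in the $W$-Sobolev space requires.
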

In order to state properly what is a weak solution of \eqref{edp2} we need to introduce some definitions.

\subsection{The operator $ \mf L_W$}

We detail here the operator $\mf L_W: \mf D_W\subset L^2(\bb T)\to L^2(\bb T)$. We start by defining its domain $\mf D_W$.
For that purpose, we consider $W(dy)$
 as the measure on the continuous torus associated to the function $W:\bb R\to \bb R$ in the usual way, or else, as the unique measure such that
 \begin{equation}
W((a,b]):=W(b)-W(a)\,\quad \forall \,a, b\in \bb T\, \textrm{with}\,a<b.
\end{equation}
Notice that the periodicity condition given in \eqref{periodic} guarantees that the measure above is well defined.

  The domain $\mf D_W$ consists on the set of functions $G$ in $L^2(\bb T)$ such that
\begin{equation*}
 G(u)=a+b\,W(u)+\int_{(0,u]}\Big(\int_0^y g(z)\,dz\Big)W(dy),\quad\forall \, u\in \bb T,
\end{equation*}
for some function $g$ in $L^2(\bb T)$ that satisfies
\begin{equation*}
 \int_0^1 g(z)\,dz=0\qquad\mbox{and}\qquad \int_{(0,1]}\Big(b+\int_0^y g(z)\,dz\Big)W(dy)=0.
\end{equation*}

 The operator $ \mf L_W$ acts on $G\in\mf D_W$
as $\mf L_W G=g$. An alternative definition of the operator can be stated in the following way. Denote by $\partial_u$ the usual space derivative and define
the generalized derivative $\partial_W$ of a function $G:\bb T\to \bb R$ by
\begin{equation}
 \partial_W G(u)=\lim_{\eps\to 0}\frac{G(u+\eps)-G(u)}{W(u+\eps)-W(u)},
\end{equation}
when the above limit exists and is finite. Keeping this in mind, given $G\in\mf D_W$,  we have  $\mf L_W G(u)=\partial_u\partial_W G(u)$, for all $u\in \bb T$.

This operator $\mf L_W$ is a Krein-Feller type operator (see e.g. \cite{f} on the subject). In \cite{fl}, it was proved that $\mf L_W$ satisfies the  properties stated in the ensuing theorem. Below $\dl \cdot, \cdot\dr$ denotes the inner product in $L ^2(\bb T)$ and $\Vert\cdot \Vert$ the corresponding norm.
\begin{theorem}\label{thm_fl}
There exists an Hilbert space $\mc H^1_W$  compactly embedded in  $L^2(\bb T)$
such that $\mf D_W \subset \mc H^1_W$ and $\mf L_W$ can be extended to $\mc H^1_W$ such that the extension enjoys the following properties:
\begin{itemize}
\item[(a)] The domain $\mc H^1_W $ is dense in $L^2(\bb T)$;
\item[(b)] The operator $\mf L_W$ is self-adjoint and non-positive $\dl H, -\mf L_W H\dr \geq 0,$ for all $H\in\mc H^1_W $;
\item[(c)] Let $\bb I$ be the identity operator. The operator $\bb I -\,\mf L_W : \mc H^1_W \rightarrow L^2(\bb T)$ is bijective and $\mf D_W$ is a core of it;
\item[(d)] The operator $\mf L_W$ is dissipative, i.e., $\Vert \mu H -\mf L_W H\Vert \geq \mu \Vert H\Vert \, ,$
for some $\mu >0$ and for all $H\in\mc H^1_W $;
\item[(e)] The eigenvalues of $- \mf L_W$ form a countable set
$0= \mu_0\le \mu_1\le \mu_2\le \cdots$ with $\lim_{n\to\infty} \mu_n  = \infty$, and all of them have finite multiplicity;
\item[(f)] There exists a complete orthonormal basis of $L^2(\bb T)$ composed of eigenfunctions $\varphi_n$ of $- \mf L_W$ associated to the eigenvalues $\mu_n$.
\end{itemize}
In view of (a), (c) and (d), by the Hille-Yosida Theorem, $\mf L_W$ is the generator of a strongly continuous contraction semigroup in $L^2(\bb T)$.
\end{theorem}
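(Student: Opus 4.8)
The plan is to follow the scheme of \cite{GJ}: we replace the single-site variable $\eta_s(0)$ by a spatial average over a small macroscopic box and then read off the limit from the already-established equilibrium fluctuations of the density. After the time change $s=rn^2$ we may write $\Gamma_n(t)=\sqrt n\int_0^t(\eta_{rn^2}(0)-\rho)\,dr$, and for $\eps>0$ we introduce the smoothed occupation time
\begin{equation*}
\Gamma_n^\eps(t)\;=\;\sqrt n\int_0^t\frac{1}{\eps n}\sum_{y=0}^{\eps n-1}\big(\eta_{rn^2}(y)-\rho\big)\,dr.
\end{equation*}
The proof then splits into three tasks: the \emph{Local Replacement} (that $\Gamma_n-\Gamma_n^\eps$ is negligible in $L^2(\bb P_{\nu_\rho})$ uniformly in $n$ as $\eps\to0$), the identification of the limit of $\Gamma_n^\eps$ via the density fluctuation field, and tightness.

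The heart of the matter, and the principal obstacle, is the Local Replacement. Setting $V_\eps(\eta)=\eta(0)-\frac{1}{\eps n}\sum_{y=0}^{\eps n-1}\eta(y)$, the Kipnis--Varadhan bound for mean-zero functionals of a reversible process gives
\begin{equation*}
\bb E_\rho\Big[\Big(\int_0^{tn^2}V_\eps(\eta_s)\,ds\Big)^2\Big]\;\le\;C\,tn^2\,\|V_\eps\|_{-1}^2,\qquad \|V_\eps\|_{-1}^2=\sup_f\big\{2\<V_\eps,f\>_\rho-\<f,(-\mc L_n)f\>_\rho\big\},
\end{equation*}
the supremum being over local functions $f$. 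I would telescope $\eta(0)-\eta(y)=\sum_{z=0}^{y-1}(\eta(z)-\eta(z+1))$, so that $V_\eps=\sum_{z=0}^{\eps n-2}a_z(\eta(z)-\eta(z+1))$ with $a_z=(\eps n-1-z)/(\eps n)\in[0,1]$. Inserting this into the variational formula, using reversibility to write $\<\eta(z)-\eta(z+1),f\>_\rho$ as half the expectation of $(\eta(z)-\eta(z+1))(f(\eta)-f(\eta^{z,z+1}))$, and applying Cauchy--Schwarz against the Dirichlet form (here $b>-1/2$ guarantees that $c_{z,z+1}$ is bounded above and below by positive constants), each bond contributes at most $a_z^2/\xi^n_{z,z+1}=a_z^2\,n(W(\frac{z+1}{n})-W(\frac{z}{n}))$, whence
\begin{equation*}
\|V_\eps\|_{-1}^2\;\le\;C\,n\sum_{z=0}^{\eps n-2}a_z^2\Big(W\big(\tfrac{z+1}{n}\big)-W\big(\tfrac{z}{n}\big)\Big).
\end{equation*}
The delicate step is now a summation by parts, which moves the decreasing weights $a_z^2$ onto the partial sums $W(\frac{z+1}{n})-W(0)$, producing exactly the Ces\`aro average of assumption \eqref{Wassumption} and giving $\|V_\eps\|_{-1}^2\le Cn\eps^\theta$; one must check that this is genuinely $o(n)$ so that the $n^{-3}$ prefactor yields $\bb E_\rho[(\Gamma_n(t)-\Gamma_n^\eps(t))^2]\le Ct\eps^\theta$ uniformly in $n$. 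Only the $H_{-1}$ norm and \eqref{Wassumption} enter, so neither spectral-gap nor jump-rate bounds are needed.

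Next I identify the limit of $\Gamma_n^\eps$. Recognizing the box average as an evaluation of the density fluctuation field $\Y^n_r(H)=\frac{1}{\sqrt n}\sum_x H(\frac{x}{n})(\eta_{rn^2}(x)-\rho)$, we have $\Gamma_n^\eps(t)=\eps^{-1}\int_0^t\Y^n_r(\iota_\eps)\,dr$ with $\iota_\eps$ a smooth approximation of $\mathbf 1_{[0,\eps]}$. By the equilibrium fluctuation theorem of \cite{fsv}, $\Y^n$ converges to the generalized Ornstein--Uhlenbeck process $\Y$ driven by $\mf L_W$; since $\Gamma_n^\eps$ is a continuous linear functional of the path of $\Y^n$, it converges to the Gaussian process $\Gamma^\eps(t)=\eps^{-1}\int_0^t\Y_r(\iota_\eps)\,dr$. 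Using the stationary covariance $\bb E[\Y_r(H)\Y_{r'}(G)]=\chi(\rho)\dl e^{|r-r'|\mf L_W}H,G\dr$ with $\chi(\rho)=\rho(1-\rho)$, together with the spectral data of Theorem \ref{thm_fl} and $\eps^{-1}\dl\iota_\eps,\varphi_k\dr\to\varphi_k(0)$, the covariance of $\Gamma^\eps$ converges as $\eps\to0$ to
\begin{equation*}
\chi(\rho)\sum_k\varphi_k(0)^2\int_0^t\!\!\int_0^s e^{-\mu_k|r-r'|}\,dr'\,dr,
\end{equation*}
which, since $\int_0^t\!\int_0^s e^{-\mu_k|r-r'|}\le C/\mu_k$ for $k\ge1$, is summable precisely when $W$ is regular enough at the origin, and thus defines a bona fide Gaussian process $\Gamma$.

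Finally, for tightness I would combine the two pieces: $\Gamma_n^\eps$ is tight for each fixed $\eps$ (it converges to a Gaussian process with H\"older-continuous paths), while the Local Replacement gives $\sup_n\bb E_\rho[(\Gamma_n-\Gamma_n^\eps)^2]\to0$ as $\eps\to0$; the standard approximation criterion then yields tightness of $\{\Gamma_n\}$ in $\mc C([0,T],\bb R)$. Convergence of the finite-dimensional distributions to those of $\Gamma$, together with the fact that $L^2$-limits of Gaussian vectors are Gaussian, identifies the limit and, combined with tightness, upgrades to convergence in the uniform topology, completing the proof. The secondary difficulty here is the joint passage $n\to\infty$, $\eps\to0$ and the approach to the singular test function $\delta_0$, which must be controlled through the eigenfunction expansion above rather than by direct substitution.
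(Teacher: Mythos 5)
Your proposal does not address the statement at hand. Theorem \ref{thm_fl} is a functional-analytic statement about the Krein--Feller operator $\mf L_W$: the construction of a Hilbert space $\mc H^1_W$ compactly embedded in $L^2(\bb T)$, self-adjointness and non-positivity of the extension, bijectivity of $\bb I - \mf L_W$ with $\mf D_W$ as a core, dissipativity, discreteness of the spectrum with finite multiplicities, and the existence of an orthonormal eigenbasis. What you have written instead is a proof sketch of the paper's main result, Theorem \ref{th2.1} (fluctuations of the occupation time): Local Replacement via the Kipnis--Varadhan $H_{-1}$ bound, identification of the limit through the density fluctuation field, and tightness. None of the six items (a)--(f) is even mentioned as something to be proved; worse, your argument explicitly \emph{invokes} the eigenvalues $\mu_k$ and eigenfunctions $\varphi_k$ of $-\mf L_W$ --- i.e., items (e) and (f) of the very theorem you were asked to establish --- so as a putative proof of Theorem \ref{thm_fl} it is circular, and as written it is simply a proof of a different statement.

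For the record, the paper itself offers no proof of Theorem \ref{thm_fl}: it is quoted verbatim from \cite{fl}, where the argument runs along entirely different lines. One must define the $W$-Sobolev space $\mc H^1_W$ through the generalized derivative $\partial_W$, prove a Rellich-type compact-embedding theorem of $\mc H^1_W$ into $L^2(\bb T)$, obtain bijectivity of $\bb I - \mf L_W$ (items (c), (d)) via an energy estimate and a Lax--Milgram or Friedrichs-extension argument, and then deduce (e) and (f) from the spectral theorem for self-adjoint operators with compact resolvent, the compactness of the resolvent being exactly what the compact embedding delivers. Nothing in your probabilistic scheme (Dirichlet-form bounds, summation by parts against $W$-increments, the fluctuation theorem of \cite{fsv}) touches these points. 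If the intended target was Theorem \ref{th2.1}, your outline is broadly aligned with the paper's Section 4, but that is not the statement you were given.
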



Finally, we state what is meant to be a weak solution to \eqref{edp2}.

\begin{definition}\label{def weak solution edp2}
A bounded function $\psi : [0,T] \times \bb T \to \bb R$
 is said to be a weak solution of the parabolic differential equation \eqref{edp2}
if, for any $t\in{[0,T]}$ and any $H\in\mc H^1_W$, the function $\psi(t,\cdot)$ satisfies the integral equation
\begin{equation*}
\int_{\bb T} \psi(t,u) H(u)\,du\;-\; \int_{\bb T} \psi(0,u) H(u)\,du\;
 -\;\int_0^t \int_{\bb T} \psi(s,u) \mf L_W \,H(u)\,du\, ds\;=\;0.
\end{equation*}
\end{definition}

\subsection{Equilibrium fluctuations and the generalized Ornstein-Uhlenbeck process}\label{sub3.3}

Following the ideas of \cite{fsv}, we define $S_W(\bb T)=\bigcap_{n=0}^{\infty}S_n$, where $S_n$ is the Hilbert space obtained by completing the space $\mf{D}_W$  with respect to the inner product $\<\cdot,\cdot\>_n$ given by
\begin{equation}\label{inner_product}
\<f,g\>_n=\sum_{k=1}^{\infty}(1+\mu_{k})^{2n}k^{2n}\int_{\bb T}P_kf(u)P_kg(u)du,
\end{equation}
where $P_k$ is the orthogonal projection on the linear space generated by the eigenfunction $\varphi_k$ given in Theorem \ref{thm_fl}. Let $S_W'(\bb T)$ denote the dual space of $S_W(\bb T)$, that is, the space of the bounded linear functionals from $S_W(\bb T)$ to $\mathbb{R}$.

We define the density fluctuation field, which is an element of $S'_W(\bb T)$, as the linear functional acting on functions $H\in{{S_W(\bb T)}}$
as
\begin{equation}\label{fluctfield}
\mc{Y}^n_t(H)=\frac{1}{\sqrt n}\sum_{x\in{\bb T}_n}H\Big(\frac{x}{n}\Big)\Big(\eta_{tn^2}(x)-\rho\Big).
\end{equation}
We will use the more compact notation $\bar{\eta}(x)$ to denote $\eta(x)-\rho$.
The equilibrium density fluctuations for these models was proved in Theorem 2.1 of \cite{fsv} and is stated as follows. Denote by $\mc D([0,T], S_W'(\bb T))$ the path space of
c\`adl\`ag trajectories with values in $S_W'(\bb T)$.

 \begin{theorem}\label{t2}

As $n$ goes to infinity, the sequence $\{\mc{Y}_t^n\,:\,t\in{[0,T]} \}_{n\in{\mathbb{N}}}$
converges, in the Skorohod topology of $\mc D([0,T], S_W'(\bb T))$, to $\{\mc{Y}_t\,:\,t\in{[0,T]} \}$ the generalized Ornstein-Uhlenbeck process which is the stationary solution of the stochastic partial differential equation given by
\begin{equation}\label{eqOU}
d\mc Y_t=\tilde{c}\;'(\rho)\mf {L}_W\mc Y_tdt+\sqrt{2\chi(\rho)\tilde{c}\;'(\rho)}d\mc B_t,
\end{equation}
where $\chi(\rho)=\rho(1-\rho)$, $\tilde{c}\;'(\rho)=1+2b\rho$ and $\mc B_t$ is a $S'_W(\bb T)$-valued Brownian motion with quadratic variation given by
\begin{equation*}
\<\mc B(H)\>_t=t\int_{\mathbb T}(\partial_W H(x))^2\,W(dx).
\end{equation*}

\end{theorem}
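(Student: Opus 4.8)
The natural route is the martingale (Holley--Stroock) characterisation of equilibrium fluctuations, adapted to the Krein--Feller operator $\mf L_W$. The plan is: for each test function produce a Dynkin decomposition of the real-valued process $t\mapsto\mc Y^n_t(H)$; identify its drift with $\tilde c'(\rho)\,\mc Y^n_s(\mf L_W H)$ up to terms that vanish in $L^2(\bb P_{\nu_\rho})$; compute the quadratic variation of the associated martingale and show it converges to the expression stated for $\<\mc B(H)\>_t$; establish tightness of $\{\mc Y^n_\cdot\}$ in $\mc D([0,T],S_W'(\bb T))$; and finally characterise every limit point as the unique stationary solution of \eqref{eqOU}. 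I would fix $H\in\mf D_W$ (dense in $S_W(\bb T)$) and apply Dynkin's formula to the time-accelerated process, obtaining
\begin{equation*}
\mc Y^n_t(H)=\mc Y^n_0(H)+\int_0^t n^2\,\mc L_n\mc Y^n_s(H)\,ds+M^n_t(H),
\end{equation*}
where $M^n_t(H)$ is a martingale and $n^2\mc L_n\mc Y^n_s(H)$ denotes the generator applied to the function $\eta\mapsto\mc Y^n(H)(\eta)=\tfrac{1}{\sqrt n}\sum_x H(\pfrac{x}{n})\bar\eta(x)$, evaluated at $\eta_{sn^2}$.

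The central computation is the drift. Only the two bonds adjacent to a site contribute, and a summation by parts against the conductances $\xi^n_{x,x+1}$ rewrites the ``linear part'' of $n^2\mc L_n\mc Y^n_s(H)$ as a discrete $W$-weighted Laplacian applied to $H$ and tested against $\bar\eta$. The key point is that this discrete operator converges to $\mf L_W H$ in the sense dictated by the inner products \eqref{inner_product}, so the linear part is asymptotically $\mc Y^n_s(\mf L_W H)$. The speed-change factor $c_{x,x+1}(\eta)=1+b(\eta(x-1)+\eta(x+2))$, however, produces genuinely nonlinear contributions, namely products of two occupation variables multiplied by discrete gradients of $H$; these must be absorbed before the limit can be taken.

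The heart of the argument is therefore a Boltzmann--Gibbs principle: I would show that, after the time integral, these quadratic terms can be replaced in $L^2(\bb P_{\nu_\rho})$ by their conditional expectation given the local density, which at equilibrium amounts to replacing $c_{x,x+1}(\eta)$ by $\tilde c'(\rho)=1+2b\rho$. This is exactly what promotes the linear part to $\tilde c'(\rho)\,\mc Y^n_s(\mf L_W H)$. Because $W$ is allowed to be a general strictly increasing c\`adl\`ag function, the usual proof via sharp spectral-gap bounds is delicate, and I expect this replacement to be the main obstacle. I would carry it out by working with the Dirichlet form weighted by the conductances, exploiting the reversibility of $\nu_\rho$ and the spectral decomposition of $\mf L_W$ provided by Theorem \ref{thm_fl}(e)--(f), together with the regularity assumption \eqref{Wassumption} to control the small-box averages appearing in the error estimate.

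In parallel I would compute the predictable quadratic variation of $M^n_t(H)$ via the carr\'e-du-champ of the exclusion generator, which gives
\begin{equation*}
\<M^n(H)\>_t=\int_0^t\sum_{x\in\bb T_n}\frac{c_{x,x+1}(\eta_{sn^2})\,\big(\eta_{sn^2}(x)-\eta_{sn^2}(x+1)\big)^2\,\big(H(\pfrac{x+1}{n})-H(\pfrac{x}{n})\big)^2}{W(\pfrac{x+1}{n})-W(\pfrac{x}{n})}\,ds.
\end{equation*}
Writing the ratio of increments as $(\p_W H)^2$ times $\big(W(\pfrac{x+1}{n})-W(\pfrac{x}{n})\big)$ turns the sum into a Riemann--Stieltjes sum with respect to $W(dx)$; replacing the integrand by its $\nu_\rho$-expectation (a local-averaging step easier than the Boltzmann--Gibbs principle, since $\bb E_\rho[(\eta(x)-\eta(x+1))^2 c_{x,x+1}]=2\chi(\rho)\tilde c'(\rho)$ by independence) yields convergence to $t\,2\chi(\rho)\tilde c'(\rho)\int_{\bb T}(\p_W H)^2\,W(dx)$, matching the stated covariance. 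Tightness in $\mc D([0,T],S_W'(\bb T))$ I would obtain from Mitoma's criterion, which reduces it to tightness of each real process $\{\mc Y^n_\cdot(H)\}$; this follows from the decomposition above together with the Aldous criterion and the uniform second-moment bounds available under the product measure $\nu_\rho$. Finally, the two limits identified above show that any limit point $\mc Y$ satisfies, for every $H$, that $\mc Y_t(H)-\mc Y_0(H)-\tilde c'(\rho)\int_0^t\mc Y_s(\mf L_W H)\,ds$ is a martingale with the prescribed quadratic variation; this is the martingale formulation of \eqref{eqOU}. Uniqueness of the resulting generalized Ornstein--Uhlenbeck process, and the identification of the stationary Gaussian initial field $\mc Y_0$ (via the central limit theorem under $\nu_\rho$), is then classical, using the self-adjointness and spectral structure of $\mf L_W$ from Theorem \ref{thm_fl}, which closes the argument.
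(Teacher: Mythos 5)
You cannot be compared against an internal argument here, because the paper contains no proof of Theorem \ref{t2}: the statement is imported verbatim from Theorem 2.1 of \cite{fsv}, and the present paper only uses it as a black box (in the proof of Lemma \ref{bound} and in the Appendix). That said, your sketch reconstructs essentially the strategy of the cited reference: Dynkin decomposition for $H$ in a core of test functions, identification of the drift, a first-order Boltzmann--Gibbs principle to linearize the speed-change contribution, the carr\'e-du-champ computation --- your constant $\bb E_{\nu_\rho}\big[c_{x,x+1}(\eta)(\eta(x)-\eta(x+1))^2\big]=2\chi(\rho)\tilde c\,'(\rho)$ and the identification of the martingale bracket as a Riemann--Stieltjes sum against $W(dx)$ are both correct --- then Mitoma's criterion for tightness in $\mc D([0,T],S_W'(\bb T))$ (which is precisely why the nuclear-type space $S_W(\bb T)$ built from \eqref{inner_product} is introduced), and finally uniqueness of the stationary solution of the martingale problem via the spectral properties of $\mf L_W$ in Theorem \ref{thm_fl}. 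So in outline you are on the same route as the proof this statement actually rests on.

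Three points in your plan deserve correction. First, assumption \eqref{Wassumption} is irrelevant to this theorem: it is introduced in this paper solely to control the Local Replacement (Lemma \ref{lemma7.1}) for the occupation time, and the fluctuation result of \cite{fsv} holds for any $W$ as in Section \ref{s2}; invoking \eqref{Wassumption} inside the Boltzmann--Gibbs estimate is spurious. Second, the drift treatment is cleaner than you suggest because the model is of gradient type: the instantaneous current satisfies $c_{x,x+1}(\eta)(\eta(x)-\eta(x+1))=h(\tau_x\eta)-h(\tau_{x+1}\eta)$, where $\tau_x$ denotes the spatial shift and $h(\eta)=\eta(0)+b\,\eta(-1)\eta(0)+b\,\eta(0)\eta(1)-b\,\eta(-1)\eta(1)$, with $\tilde c(\rho):=\bb E_{\nu_\rho}[h]=\rho+b\rho^2$. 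After two summations by parts (one absorbing the weights $\xi^n_{x,x+1}$ into a discrete $W$-derivative), the drift is the fluctuation field of $h$ tested against a discrete $W$-Laplacian of $H$, and the Boltzmann--Gibbs statement one needs is the replacement of the field of $h-\tilde c(\rho)$ by $\tilde c\,'(\rho)\,\bar\eta(0)$ --- not the replacement of the rate $c_{x,x+1}$ by its mean; that $\bb E_{\nu_\rho}[c_{x,x+1}]=1+2b\rho$ happens to coincide with $\tilde c\,'(\rho)$ is a peculiarity of this rate, not the mechanism. Third, your proposed proof of the Boltzmann--Gibbs principle via the spectral decomposition of $\mf L_W$ is misdirected: the eigenstructure of the continuum operator does not control microscopic variances; the cited proof works at the particle level, through Kipnis--Varadhan-type variational bounds and comparison with the conductance-weighted Dirichlet form \eqref{dirichlet}. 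With these emendations your outline is sound.
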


\section{Proof of Theorem \ref{th2.1}}
The proof of this theorem relies on two steps. First, we claim that the occupation time is close to an additive functional of the density fluctuation field $\mc{Y}_t^n$, this is what we called the Local  Replacement. Second, we use Theorem \ref{t2} to prove that the additive functional of the density fluctuation field $\mc{Y}_t^n$ converges to a Gaussian process.
Before proving these two claims we develop some crucial estimates that we need in due course.

\subsection{The Local Replacement.}

For a function $g\in L^2(\nu_\rho)$, we denote by $\mc D_n(g)$  the Dirichlet form of the function $g$,  defined as:
$\mc D_n (g) \;=\;- \int g(\eta) \mc L_n g(\eta){\nu_{\rho}}(d\eta).$
 An elementary computation shows
that
\begin{equation}\label{dirichlet}
\mc D_n (g) \;=\; \sum_{x\in \bb T_n}\frac{ \xi_{x,x+1}^n}{2}
\int  c_{x,x+1}(\eta)\Big( g(\eta^{x,x+1}) -
g(\eta) \Big)^2 \,\nu_\rho(d\eta).
\end{equation}

 \begin{lemma}[{\em{Local Replacement}}] \label{lemma7.1}
\quad

 For   all $\ell\geq{1}$ and $t\in{[0,T]}$, it holds that
\begin{equation*}
\mathbb{E}_\rho\Big[\Big(\int_{0}^t \{\bar \eta_{sn^2}(0)-\bar
\eta_{sn^2}^\ell(0)\}ds\Big)^2\Big]\leq{\frac{ 20 \,t\,}{n^2\ell} C(\rho)\sum_{y=0}^{\ell-1}\sum_{z=0}^{y-1} \frac{1}{\xi^n_{z,z+1}}
},
\end{equation*}
where
\begin{equation*}
\bar{\eta}^{\ell}_{sn^2}(0)=\frac{1}{\ell}\sum_{y=0}^{\ell-1}\bar{\eta}_{sn^2}(y)
\end{equation*}
and $C(\rho)$ is a positive constant.
\end{lemma}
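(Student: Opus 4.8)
\emph{Strategy.} Write $V(\eta):=\bar\eta(0)-\bar\eta^\ell(0)$, a mean-zero local function under $\nu_\rho$, where $\bar\eta^\ell(0)=\frac1\ell\sum_{y=0}^{\ell-1}\bar\eta(y)$. Since the integrand is the process observed at the accelerated scale $sn^2$, i.e.\ the Markov process generated by $n^2\mc L_n$ started from the reversible measure $\nu_\rho$, the plan is to invoke the Kipnis--Varadhan inequality for reversible processes: for a mean-zero observable $V$,
\begin{equation*}
\mathbb E_\rho\Big[\Big(\int_0^t V(\eta_{sn^2})\,ds\Big)^2\Big]\;\le\; 20\,t\,\|V\|_{-1}^2,
\end{equation*}
where the $H_{-1}$ norm is given by the variational formula
\begin{equation*}
\|V\|_{-1}^2\;=\;\sup_f\Big\{\,2\int V(\eta)f(\eta)\,\nu_\rho(d\eta)\;-\;n^2\mc D_n(f)\,\Big\},
\end{equation*}
the supremum running over local functions $f$. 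Everything then reduces to estimating $\|V\|_{-1}^2$, and I expect this estimate to carry the whole proof.

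\emph{Gradient decomposition and antisymmetrization.} Using $\eta(0)-\eta(y)=\sum_{z=0}^{y-1}(\eta(z)-\eta(z+1))$ and averaging over $y$, I would rewrite
\begin{equation*}
V(\eta)\;=\;\sum_{z=0}^{\ell-2}\alpha_z\,(\eta(z)-\eta(z+1)),\qquad \alpha_z:=\frac{\ell-1-z}{\ell}\in(0,1].
\end{equation*}
Because $\nu_\rho$ is invariant under the exchange $\eta\mapsto\eta^{z,z+1}$ while $\eta(z)-\eta(z+1)$ changes sign under it, for every $f$
\begin{equation*}
\int (\eta(z)-\eta(z+1))f\,d\nu_\rho\;=\;\tfrac12\int (\eta(z)-\eta(z+1))\big(f(\eta)-f(\eta^{z,z+1})\big)\,d\nu_\rho.
\end{equation*}
This elementary but crucial antisymmetrization is what lets the bond increments of $f$, and hence the Dirichlet form, appear.

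\emph{Weighted Young inequality.} Inserting this into $2\int Vf\,d\nu_\rho$ and applying, bond by bond, the inequality $ab\le \frac{\alpha_z}{2}a^2+\frac{1}{2\alpha_z}b^2$ with $a=(\eta(z)-\eta(z+1))/\sqrt{n^2\xi^n_{z,z+1}c_{z,z+1}}$ and $b=\sqrt{n^2\xi^n_{z,z+1}c_{z,z+1}}\,(f(\eta)-f(\eta^{z,z+1}))$, the part carrying $f$ becomes $\sum_{z=0}^{\ell-2}\frac{n^2\xi^n_{z,z+1}}{2}\int c_{z,z+1}(f(\eta)-f(\eta^{z,z+1}))^2 d\nu_\rho$, which is a partial sum of $n^2\mc D_n(f)$ and is therefore absorbed. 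The remaining, $f$-free term is
\begin{equation*}
\sum_{z=0}^{\ell-2}\frac{\alpha_z^2}{2\,n^2\xi^n_{z,z+1}}\int\frac{(\eta(z)-\eta(z+1))^2}{c_{z,z+1}(\eta)}\,d\nu_\rho.
\end{equation*}
Here I would use $b>-1/2$ to obtain the uniform lower bound $c_{z,z+1}\ge c_0:=\min\{1,1+2b\}>0$, together with $\int(\eta(z)-\eta(z+1))^2 d\nu_\rho=2\chi(\rho)$; since $\alpha_z^2\le\alpha_z$ and $\sum_{z=0}^{\ell-2}\alpha_z/\xi^n_{z,z+1}=\frac1\ell\sum_{y=0}^{\ell-1}\sum_{z=0}^{y-1}(\xi^n_{z,z+1})^{-1}$, this yields $\|V\|_{-1}^2\le \frac{\chi(\rho)}{c_0 n^2\ell}\sum_{y=0}^{\ell-1}\sum_{z=0}^{y-1}(\xi^n_{z,z+1})^{-1}$. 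Combining with the Kipnis--Varadhan bound and setting $C(\rho):=\chi(\rho)/c_0$ gives exactly the claim.

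\emph{Main obstacle.} The genuinely delicate points are the bookkeeping of the weights $\alpha_z$, so that the Dirichlet form is absorbed with coefficient at most one while the leftover double sum matches the stated one, and the consistent handling of the $n^2$ time acceleration, which must be attached both to the generator and to the Dirichlet form in the variational formula. The positivity $c_0>0$, guaranteed precisely by the ergodicity condition $b>-1/2$, is what makes the bound uniform and free of any spectral-gap or jump-rate-boundedness input.
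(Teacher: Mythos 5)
Your proof is correct and follows essentially the same route as the paper's: the Kipnis--Varadhan variational bound (Proposition A1.6.1 of Kipnis--Landim) with the $n^2$-accelerated Dirichlet form, the telescoping decomposition of $\bar\eta(0)-\bar\eta^\ell(0)$ into bond gradients, antisymmetrization via the reversibility of $\nu_\rho$ under $\eta\mapsto\eta^{z,z+1}$, and a weighted Young inequality whose $f$-part is a partial sum of $n^2\mc{D}_n(f)$ and is absorbed. The only (immaterial) difference is in the constant: you bound the rates below by $c_0=\min\{1,1+2b\}$ and use $\int(\eta(z)-\eta(z+1))^2\,d\nu_\rho=2\chi(\rho)$, getting $C(\rho)=\chi(\rho)/c_0$, whereas the paper bounds $(\eta(z)-\eta(z+1))^2\leq 1$ and computes $\int c_{z,z+1}(\eta)^{-1}\,\nu_\rho(d\eta)$ exactly; both are admissible since the lemma only asserts some positive constant $C(\rho)$.
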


In order to prove the last lemma, we use the following result.

 \begin{lemma}
For $g\in{L^2(\nu_\rho)}$ and for a constant $A>0$, it holds that
\begin{equation*}
\int_{\Omega_n}\! \{\bar \eta(0)-\bar \eta^\ell(0)\}g(\eta)\nu_\rho(d\eta)\leq{\frac{A}{2\ell} \sum_{y=0}^{\ell-1}\sum_{z=0}^{y-1} \frac{1}{\xi^n_{z,z+1}}\int_{\Omega_n}\frac{1}{c_{z,z+1}(\eta)}\nu_\rho(d\eta)
+ \frac{1}{2A}\mc{D}_n(g)}.
\end{equation*}
\end{lemma}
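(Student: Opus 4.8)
The plan is to reduce the left-hand side to a sum of single-bond gradients, integrate each one by parts using the reversibility of $\nu_\rho$, and then split by Young's inequality into a term that does not see $g$ and a term that reconstitutes the Dirichlet form $\mc D_n(g)$.

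First I would write the averaged difference as a telescoping sum over bonds. Since $\rho$ is constant, $\bar\eta(0)-\bar\eta^\ell(0)=\frac{1}{\ell}\sum_{y=0}^{\ell-1}(\eta(0)-\eta(y))$, and telescoping along nearest-neighbour bonds gives $\eta(0)-\eta(y)=\sum_{z=0}^{y-1}(\eta(z)-\eta(z+1))$, the term $y=0$ being an empty sum. Hence
\begin{equation*}
\int_{\Omega_n}\{\bar\eta(0)-\bar\eta^\ell(0)\}\,g(\eta)\,\nu_\rho(d\eta)=\frac{1}{\ell}\sum_{y=0}^{\ell-1}\sum_{z=0}^{y-1}\int_{\Omega_n}(\eta(z)-\eta(z+1))\,g(\eta)\,\nu_\rho(d\eta),
\end{equation*}
which already exhibits the double sum of the statement.

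Next I would integrate each bond term by parts. The enabling structural facts are that the swap $\eta\mapsto\eta^{z,z+1}$ preserves the product measure $\nu_\rho$, that $\eta(z)-\eta(z+1)$ is antisymmetric under it, and that the rate $c_{z,z+1}(\eta)=1+b(\eta(z-1)+\eta(z+2))$ is \emph{invariant} under it, since it depends only on the sites $z-1$ and $z+2$. Changing variables $\eta\mapsto\eta^{z,z+1}$ and averaging the two expressions yields
\begin{equation*}
\int_{\Omega_n}(\eta(z)-\eta(z+1))\,g(\eta)\,\nu_\rho(d\eta)=\frac{1}{2}\int_{\Omega_n}(\eta(z)-\eta(z+1))\big(g(\eta)-g(\eta^{z,z+1})\big)\,\nu_\rho(d\eta).
\end{equation*}
I would then insert the factor $\sqrt{\xi^n_{z,z+1}\,c_{z,z+1}(\eta)}$ and apply Young's inequality $ab\le\frac{A}{2}a^2+\frac{1}{2A}b^2$ with $a=(\eta(z)-\eta(z+1))/\sqrt{\xi^n_{z,z+1}c_{z,z+1}(\eta)}$ and $b=\sqrt{\xi^n_{z,z+1}c_{z,z+1}(\eta)}\,(g(\eta)-g(\eta^{z,z+1}))$. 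Since $(\eta(z)-\eta(z+1))^2\le 1$, the first resulting term is at most $\frac{A}{4}\,\frac{1}{\xi^n_{z,z+1}}\int_{\Omega_n}c_{z,z+1}(\eta)^{-1}\,\nu_\rho(d\eta)$, while the second equals $\frac{1}{4A}\int_{\Omega_n}\xi^n_{z,z+1}c_{z,z+1}(\eta)(g(\eta)-g(\eta^{z,z+1}))^2\,\nu_\rho(d\eta)$, that is, twice the bond-$z$ contribution to $\mc D_n(g)$.

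Finally I would sum over $y$ and $z$ and divide by $\ell$. The first family of terms is kept as the explicit double sum of the statement, with the harmless constant $\frac{A}{4}\le\frac{A}{2}$. For the second family the crux is a multiplicity count: in $\sum_{y=0}^{\ell-1}\sum_{z=0}^{y-1}$ each fixed bond $z$ occurs $\ell-1-z<\ell$ times, so after dividing by $\ell$ its coefficient is at most $1$; since the bond contributions to $\mc D_n(g)$ are nonnegative and $\{0,\dots,\ell-2\}\subset\bb T_n$, the whole family collapses to at most $\frac{1}{4A}\cdot 2\,\mc D_n(g)=\frac{1}{2A}\mc D_n(g)$, which is the claimed bound. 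The step I would watch most carefully is precisely this interplay between the $1/\ell$ prefactor and the bond multiplicities: it is what prevents the Dirichlet term from acquiring an extra factor of $\ell$, and it relies on treating the $g$-dependent term as a sum over distinct bonds rather than over the doubly-indexed pairs.
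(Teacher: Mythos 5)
Your proof is correct and follows essentially the same route as the paper's: telescoping the averaged difference over bonds, symmetrizing each bond term via the measure-preserving swap $\eta\mapsto\eta^{z,z+1}$, and splitting with a weighted Young inequality so that the $g$-dependent part reconstitutes the Dirichlet form $\mc{D}_n(g)$. If anything, your bookkeeping (keeping the factor $\tfrac{1}{2}$ in Young's inequality and spelling out the bond-multiplicity count against the $1/\ell$ prefactor) is slightly more careful than the paper's, which writes the Cauchy--Schwarz bound without that factor and leaves the multiplicity argument implicit in the final appeal to the definition of $\mc{D}_n(g)$.
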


\begin{proof}
By the definition of the empirical average $\bar{\eta}^\ell(0)$, we can rewrite the integral on the left hand side in the statement of the lemma as
\begin{equation*}
\frac{1}{\ell} \sum_{y=0}^{\ell-1}\sum_{z=0}^{y-1} \int_{\Omega_n}\{\eta(z)-\eta(z+1)\}g(\eta)\nu_\rho(d\eta).
\end{equation*}
Writing the previous expression as twice its half and performing the change of variables $\eta\mapsto\eta^{z,z+1}$, for which the
measure $\nu_\rho$ is invariant, it equals to
\begin{equation*}
\frac{1}{2\ell}\sum_{y=0}^{\ell-1}\sum_{z=0}^{y-1}\int_{\Omega_n} (\eta(z)-\eta(z+1))(g(\eta)-g(\eta^{z,z+1}))\nu_\rho(d\eta).
\end{equation*}
By the Cauchy-Schwarz inequality we bound the expression above by
\begin{equation*}
\begin{split}
&\frac{1}{2\ell} \sum_{y=0}^{\ell-1}\sum_{z=0}^{y-1}\frac{1}{\xi^n_{z,z+1}} \int_{\Omega_n}\frac{A}{c_{z,z+1}(\eta)}(\eta(z)-\eta(z+1))^2\nu_\rho(d\eta)\\+&
\frac{1}{2\ell} \sum_{y=0}^{\ell-1}\sum_{z=0}^{y-1} \xi_{z,z+1}^n\int_{\Omega_n}\frac{c_{z,z+1}(\eta)}{A}(g(\eta)-g(\eta^{z,z+1}))^2\nu_\rho(d\eta).
\end{split}
\end{equation*}
To finish the proof it is enough to recall  \eqref{dirichlet}.
\end{proof}

\begin{proof}[Proof of Lemma  \ref{lemma7.1}.]

By Proposition A1.6.1 of \cite{kl}, we have that
\begin{equation*}
\begin{split}
\mathbb{E}_\rho\Big[&\Big(\int_{0}^t \{\bar \eta_{sn^2}(0)-\bar \eta_{sn^2}^\ell(0)\}ds\Big)^2\Big]\\
&\leq20 \,t\, \sup_{g\in{L^2(\nu_\rho)}}\Big\{2\int_{\Omega_n} \{\bar \eta(0)-\bar \eta^\ell(0)\}g(\eta)\nu_\rho(d\eta) - n^2\mc{D}_n(g)\Big\}\\
&\leq 20 \,t\, \sup_{g\in{L^2(\nu_\rho)}} \Big\{ \frac{A}{\ell} \sum_{y=0}^{\ell-1}\sum_{z=0}^{y-1} \frac{1}{\xi^n_{z,z+1}}\int_{\Omega_n}\frac{1}{c_{z,z+1}(\eta)}\nu_\rho(d\eta)+ \frac{1}{A}\mc{D}_n(g) -n^2\mc{D}_n(g)\Big\}.
 \end{split}
\end{equation*}
In last inequality we used the previous lemma.
Taking $1/A=n^2$ we get the bound
\begin{equation*}
\frac{ 20 \,t\,}{n^2 \ell} \sum_{y=0}^{\ell-1}\sum_{z=0}^{y-1} \frac{1}{\xi^n_{z,z+1}}\int_{\Omega_n}\frac{1}{c_{z,z+1}(\eta)}\nu_\rho(d\eta).
\end{equation*}
To conclude it is enough to observe that
\begin{equation}
\int_{\Omega_n}\frac{1}{c_{z,z+1}(\eta)}\nu_\rho(d\eta)=(1-\rho)^2+\frac{2}{1+b}\rho(1-\rho)+\frac{1}{1+2b}\rho^2\;:=\;C(\rho).
\end{equation}
\end{proof}
 \begin{corollary}
For  any $\eps>0$ and any $t\in{[0,T]}$,  it holds that
\begin{equation*}
\mathbb{E}_\rho\Big[\Big(\int_{0}^t \{\bar \eta_{sn^2}(0)-\bar
\eta_{sn^2}^{\eps n}(0)\}ds\Big)^2\Big]\leq{\frac{ 20 \,t}{\eps n^2}C(\rho)\sum_{y=0}^{\eps n-1}\big(W\big(\pfrac{y}{n}\big)-W(0)\big)},
\end{equation*}
for a positive constant $C(\rho)$.
\end{corollary}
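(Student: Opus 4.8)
The plan is to obtain this corollary directly from Lemma \ref{lemma7.1} by specializing the block size to $\ell=\eps n$ and then evaluating the double sum explicitly, using the definition of the conductances. Since the bound in Lemma \ref{lemma7.1} already has exactly the right shape, the entire task reduces to simplifying the quantity $\sum_{y=0}^{\ell-1}\sum_{z=0}^{y-1}(\xi^n_{z,z+1})^{-1}$, so there is no new probabilistic input required.

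First I would recall the explicit form of the conductances, namely $\xi^n_{z,z+1}=\big[n\big(W(\tfrac{z+1}{n})-W(\tfrac{z}{n})\big)\big]^{-1}$, which gives $(\xi^n_{z,z+1})^{-1}=n\big(W(\tfrac{z+1}{n})-W(\tfrac{z}{n})\big)$. The inner sum over $z$ then telescopes, and exploiting the monotonicity of $W$ (which makes every resulting increment nonnegative) one gets
\begin{equation*}
\sum_{z=0}^{y-1}\frac{1}{\xi^n_{z,z+1}}=n\sum_{z=0}^{y-1}\Big(W\big(\tfrac{z+1}{n}\big)-W\big(\tfrac{z}{n}\big)\Big)=n\Big(W\big(\tfrac{y}{n}\big)-W(0)\Big).
\end{equation*}
Summing over $y$ yields $\sum_{y=0}^{\ell-1}\sum_{z=0}^{y-1}(\xi^n_{z,z+1})^{-1}=n\sum_{y=0}^{\ell-1}\big(W(\tfrac{y}{n})-W(0)\big)$.

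Next I would substitute this identity back into the bound of Lemma \ref{lemma7.1} and set $\ell=\eps n$. The prefactor $\tfrac{20t}{n^2\ell}C(\rho)$ combines with the extra factor $n$ produced by the telescoping to give $\tfrac{20t}{n\ell}C(\rho)=\tfrac{20t}{\eps n^2}C(\rho)$, which is precisely the constant appearing in the statement, so the claimed inequality follows at once. I do not expect any genuine obstacle here: the corollary is a routine specialization of the Local Replacement lemma, the only points requiring care being the bookkeeping of the powers of $n$ (the single factor of $n$ carried by each conductance cancels against the $1/n^2\ell$ prefactor) and the use of the monotonicity of $W$ so that the telescoped sum on the right is a legitimate upper bound. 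This estimate is exactly the form that, together with assumption \eqref{Wassumption}, will deliver the $O(\eps^\theta)$ control needed for the Local Replacement step in the proof of Theorem \ref{th2.1}.
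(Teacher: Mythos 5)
Your proposal is correct and follows essentially the same route as the paper: apply Lemma \ref{lemma7.1} with $\ell=\eps n$, use $1/\xi^n_{z,z+1}=n\big(W\big(\pfrac{z+1}{n}\big)-W\big(\pfrac{z}{n}\big)\big)$ so the inner sum telescopes to $n\big(W\big(\pfrac{y}{n}\big)-W(0)\big)$, and combine the resulting factor of $n$ with the prefactor $\frac{20t}{n^2\ell}C(\rho)$. The only cosmetic difference is that you record the telescoped double sum as an exact identity while the paper states it as an upper bound; both are valid and yield the same conclusion.
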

\begin{proof}
This result is a consequence of Lemma \ref{lemma7.1} with $\ell=\varepsilon n$ and the fact that $\xi^n_{x,x+1}=\frac{1}{n(W(\frac{x+1}{n})-W(\frac{x}{n}))}$ so that
\begin{equation*}
\sum_{y=0}^{\eps n-1}\sum_{z=0}^{y-1} \frac{1}{\xi^n_{z,z+1}}\leq {n\sum_{y=0}^{\eps n-1}\big(W\big(\pfrac{y}{n}\big)-W(0)\big)}.
\end{equation*}
\end{proof}

 \begin{corollary} \label{Wcorollary}
 For any $\eps\geq 0$, for  any $W$ satisfying \eqref{Wassumption} for some $\theta>0$ and for any $t\in{[0,T]}$,
it holds that
\begin{equation*}
\mathbb{E}_\rho\Big[\Big(\sqrt{n}\int_{0}^t \{\bar \eta_{sn^2}(0)-\bar
\eta_{sn^2}^{\eps n}(0)\}ds\Big)^2\Big]\,\leq\, 20 \,t \,C(\rho)\,\eps^\theta,
\end{equation*}
for a positive constant $C(\rho)$.
\end{corollary}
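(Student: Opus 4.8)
The plan is to reduce the statement directly to the preceding corollary combined with the technical assumption \eqref{Wassumption}; no new probabilistic input is needed. First I would pull the deterministic factor $\sqrt{n}$ out of the square inside the expectation, so that
\[
\mathbb{E}_\rho\Big[\Big(\sqrt{n}\int_{0}^t \{\bar \eta_{sn^2}(0)-\bar \eta_{sn^2}^{\eps n}(0)\}ds\Big)^2\Big]
= n\,\mathbb{E}_\rho\Big[\Big(\int_{0}^t \{\bar \eta_{sn^2}(0)-\bar \eta_{sn^2}^{\eps n}(0)\}ds\Big)^2\Big].
\]
This turns the left-hand side into exactly $n$ times the quantity controlled by the previous corollary.

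Second, I would invoke that corollary with the same $\eps$ to bound the right-hand side above by
\[
n\cdot\frac{20\,t}{\eps n^2}\,C(\rho)\sum_{y=0}^{\eps n-1}\big(W\big(\pfrac{y}{n}\big)-W(0)\big)
= 20\,t\,C(\rho)\cdot\frac{1}{\eps n}\sum_{y=0}^{\eps n-1}\big(W\big(\pfrac{y}{n}\big)-W(0)\big).
\]
The extra factor of $n$ cancels one power of $n$ in the denominator, leaving precisely the spatial average that appears on the left-hand side of assumption \eqref{Wassumption}.

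Finally, I would apply \eqref{Wassumption}, which furnishes a constant $C>0$ (depending only on $W$ and $\theta$) such that $\frac{1}{\eps n}\sum_{y=0}^{\eps n-1}(W(y/n)-W(0))\leq C\eps^\theta$. Substituting this bound and folding $C$ into the constant $C(\rho)$ yields the claimed inequality, namely $20\,t\,C(\rho)\,\eps^\theta$.

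There is essentially no genuine obstacle here: the result is a mechanical combination of the earlier corollary—where the $\sqrt{n}$ scaling merely absorbs the surplus power of $n$—with the very definition of \eqref{Wassumption}. The only point requiring minimal care is that the $O(\eps^\theta)$ in \eqref{Wassumption} conceals a constant depending on $W$ and $\theta$, which must be merged into $C(\rho)$; this is harmless, since $C(\rho)$ is in any case permitted to change from line to line.
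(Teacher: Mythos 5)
Your proposal is correct and follows exactly the paper's own argument: square out the factor $\sqrt{n}$, apply the preceding corollary, and then invoke assumption \eqref{Wassumption} to bound the resulting spatial average by a constant times $\eps^\theta$. Your remark about absorbing the hidden constant from the $O(\eps^\theta)$ into $C(\rho)$ is a point the paper glosses over (its proof ends with a dangling reference to an unnamed constant $c$), so your version is if anything slightly more careful.
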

\begin{proof}
By the previous corollary, the expectation above is bounded from above by
\begin{equation*}
\frac{ 20 \,t}{\eps n}C(\rho)\sum_{y=0}^{\eps n-1}\big(W\big(\pfrac{y}{n}\big)-W(0)\big),
\end{equation*}
and  by the assumption \eqref{Wassumption} last term is smaller than $20 \,t \,C(\rho)\,\eps^\theta$, where $c$ is a constant.
\end{proof}

At this point we are able to use the Local Replacement in order to prove that the occupation time is close to the additive functional of the density of particles. For that purpose, for $\varepsilon \in (0,1)$ we denote by $\iota_\varepsilon$ the function $y  \mapsto \varepsilon^{-1} \textbf{1}_{[0,\varepsilon]}(y)$. The sequence $\{\iota_\varepsilon\,;\, \varepsilon \in (0,1)\}$ is therefore an {\em approximation of the identity}.

\begin{proposition} \label{Wproposition}
Fix $t>0$.
Then
\begin{equation}\label{Wbound}
\bb E_{\rho}\Big[\Big(\Gamma_n(t)-\int_{0}^t\mc{Y}^n_s(\iota_\varepsilon)ds \Big)^2\Big]\leq Ct\varepsilon^{\theta}.
\end{equation}
\end{proposition}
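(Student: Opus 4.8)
The plan is to unwind both quantities inside the expectation until their difference becomes, up to a harmless boundary term, \emph{exactly} the random variable already controlled in Corollary \ref{Wcorollary}. In this sense the proposition carries essentially no new analytic content beyond Lemma \ref{lemma7.1}; it is a matter of matching definitions on the correct time scale. First I would rewrite the occupation time in diffusive scale: performing the change of variables $s\mapsto sn^2$ in \eqref{ot} and recalling $\bar\eta=\eta-\rho$ gives
\begin{equation*}
\Gamma_n(t)=\frac{n^2}{n^{3/2}}\int_{0}^{t}\bar\eta_{sn^2}(0)\,ds=\sqrt n\int_{0}^{t}\bar\eta_{sn^2}(0)\,ds.
\end{equation*}

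Next I would expand the additive functional of the fluctuation field. Using the explicit expression \eqref{fluctfield} for $\mc Y^n_s$ evaluated at $H=\iota_\varepsilon$, and noting that $\iota_\varepsilon(x/n)=\varepsilon^{-1}$ precisely at the sites $x$ with $x/n\in[0,\varepsilon]$ and vanishes otherwise, the spatial sum collapses to an empirical average over a macroscopic box of size $\varepsilon$. Taking $\varepsilon n$ to be an integer,
\begin{equation*}
\mc Y^n_s(\iota_\varepsilon)=\frac{1}{\sqrt n\,\varepsilon}\sum_{x=0}^{\varepsilon n-1}\bar\eta_{sn^2}(x)=\sqrt n\;\bar\eta^{\,\varepsilon n}_{sn^2}(0),
\end{equation*}
so that $\int_0^t \mc Y^n_s(\iota_\varepsilon)\,ds=\sqrt n\int_0^t \bar\eta^{\,\varepsilon n}_{sn^2}(0)\,ds$. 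The only subtlety here is whether the indicator includes the endpoint $x=\varepsilon n$: the stated $\iota_\varepsilon=\varepsilon^{-1}\mathbf 1_{[0,\varepsilon]}$ would contribute one extra boundary site relative to the $\varepsilon n$ sites averaged in $\bar\eta^{\,\varepsilon n}(0)$. Since $\iota_\varepsilon$ enters only through the discrete sum, and $\mathbf 1_{[0,\varepsilon]}$ and $\mathbf 1_{[0,\varepsilon)}$ agree almost everywhere (hence define the same approximation of the identity), I would simply adopt the half-open convention, which makes the identification above exact and removes the discrepancy.

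Combining the two displays, the difference is
\begin{equation*}
\Gamma_n(t)-\int_{0}^{t}\mc Y^n_s(\iota_\varepsilon)\,ds=\sqrt n\int_{0}^{t}\big\{\bar\eta_{sn^2}(0)-\bar\eta^{\,\varepsilon n}_{sn^2}(0)\big\}\,ds,
\end{equation*}
which is precisely the random variable whose second moment is estimated in Corollary \ref{Wcorollary}. Taking expectations and invoking that corollary yields the bound $20\,t\,C(\rho)\,\varepsilon^\theta\le Ct\varepsilon^\theta$, as claimed. All the real work—the Dirichlet form bound obtained from the Kipnis–Varadhan variational formula (Proposition A1.6.1 of \cite{kl}) together with the use of assumption \eqref{Wassumption} to convert $\tfrac{1}{\varepsilon n}\sum_{y}(W(y/n)-W(0))$ into $\varepsilon^\theta$—has already been carried out in Lemma \ref{lemma7.1} and its corollaries. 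The only place requiring genuine care is the bookkeeping that identifies the time-integrated fluctuation field with the block average; I expect the mild boundary effect at the single site $x=\varepsilon n$ to be the sole technical point, and it is disposed of by the almost-everywhere convention noted above.
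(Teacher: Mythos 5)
Your proof is correct and follows essentially the same route as the paper's: rewrite $\Gamma_n(t)$ on the diffusive time scale, identify $\int_0^t\mc{Y}^n_s(\iota_\varepsilon)\,ds$ with $\sqrt{n}\int_0^t\bar{\eta}^{\varepsilon n}_{sn^2}(0)\,ds$, and conclude by Corollary \ref{Wcorollary}. You are in fact slightly more careful than the paper itself, whose displayed sum runs over $\varepsilon n+1$ sites while $\bar{\eta}^{\varepsilon n}(0)$ averages only $\varepsilon n$ of them; your half-open endpoint convention for $\iota_\varepsilon$ disposes of exactly this off-by-one, which the paper passes over silently.
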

\begin{proof}
Observe that
\begin{equation*}
\begin{split}
\Gamma_n(t)-\int_{0}^{t}\mc{Y}^n_s(\iota_\varepsilon)ds=&\frac{1}{n^{3/2}}\int_{0}^{tn^2}\bar{\eta}_s(0)ds-\int_{0}^{t}\frac{1}{\varepsilon \sqrt{n}}\sum_{x=0}^{\varepsilon n}\bar{\eta}_{sn^2}(x)ds\\
&=\sqrt {n}\int_{0}^{t}\Big(\bar{\eta}_{sn^2}(0)-\bar{\eta}^{\varepsilon n}_{sn^2}(0)\Big)ds.
\end{split}
\end{equation*}

In the first equality we used the definitions of $\Gamma_n(t)$ and $\mc{Y}^n_s$ given, respectively, in \eqref{ot} and \eqref{fluctfield} and the definition of  $\iota_\varepsilon$ given above. In the second equality, we used a change of variables in the time integral.
Now, it is enough to recall Corollary \ref{Wcorollary} in order to finish the proof.
\end{proof}

\subsection{The approximation in the $S_W(\bb T)$  space}

So far, we were able to show that the occupation time is close to the additive functional of the density of particles evaluated at the function $\iota_\eps$. We would like to invoke  the Theorem 2.1 of \cite{fsv} in order to assure the convergence of the density fluctuation  field $\mc{Y}^n_t$ to some process $\mc{Y}_t$, as $n$ tends to infinity. However,  the function $\iota_\eps$ does not belong to the space of test functions $S_W(\bb T)$,  therefore, we can not apply directly the Theorem 2.1 of \cite{fsv} to $\mc{Y}^n_t(\iota_\eps)$. To overcome this problem, we approximate first the function $\iota_\eps$ by a sequence of functions $\{\iota^k_{\varepsilon}\}_{k\in \bb N}$ in the space of test functions $S_W(\bb T)$ . This is the content of the next lemma.

Denote by ${\bf{1}}_A(u)$ the function that takes the value $1$ if $u\in A$ and $0$ if $u\notin A$.

\begin{lemma}\label{iota}
For fixed $\varepsilon\in(0,1)$, there exists a sequence of functions  $\{\iota^k_{\varepsilon}\}_{k\in{\mathbb N}}$ in the space of test functions $S_W(\bb T)$ converging to $\iota_\varepsilon$ in the $L^2(\bb T)$-norm, as $k$ tends to infinity
\end{lemma}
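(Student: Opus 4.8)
The goal is to approximate the discontinuous indicator-type function $\iota_\varepsilon = \varepsilon^{-1}\mathbf{1}_{[0,\varepsilon]}$ by functions lying in the test-function space $S_W(\bb T)$, converging in $L^2(\bb T)$. Let me think about the structure of $S_W(\bb T)$. By its definition via the inner products \eqref{inner_product}, a function $H$ belongs to $S_W(\bb T)$ precisely when its projections $P_k H$ onto the eigenfunctions $\varphi_k$ of $-\mf L_W$ decay fast enough that $\sum_k (1+\mu_k)^{2n} k^{2n} \|P_k H\|^2 < \infty$ for every $n$. Since $\{\varphi_k\}$ is a complete orthonormal basis of $L^2(\bb T)$ by Theorem \ref{thm_fl}(f), the natural candidate is to take truncations of the eigenfunction expansion of $\iota_\varepsilon$.

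My plan is the following. First I would expand $\iota_\varepsilon$ in the orthonormal basis, writing $\iota_\varepsilon = \sum_{k\geq 0} c_k \varphi_k$ with $c_k = \langle \iota_\varepsilon, \varphi_k\rangle_{L^2}$; this converges in $L^2(\bb T)$ since $\{\varphi_k\}$ is complete and $\iota_\varepsilon \in L^2(\bb T)$. Then I would define the truncations
\begin{equation*}
\iota_\varepsilon^k \;=\; \sum_{j=0}^{k} c_j\, \varphi_j.
\end{equation*}
Each $\iota_\varepsilon^k$ is a finite linear combination of eigenfunctions, so all the sums defining the norms $\<\iota_\varepsilon^k,\iota_\varepsilon^k\>_n$ are finite sums and hence finite for every $n$; this shows $\iota_\varepsilon^k \in S_W(\bb T)$. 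Convergence $\iota_\varepsilon^k \to \iota_\varepsilon$ in $L^2(\bb T)$ as $k\to\infty$ is then immediate from Parseval, since $\|\iota_\varepsilon - \iota_\varepsilon^k\|^2 = \sum_{j>k} |c_j|^2 \to 0$.

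The only point requiring genuine care is verifying that a finite linear combination of the $\varphi_k$ actually lies in $S_W(\bb T) = \bigcap_n S_n$, i.e.\ that each $\varphi_k$ individually has finite $\<\cdot,\cdot\>_n$-norm for every $n$. This holds because $P_j \varphi_k = \delta_{jk}\varphi_k$, so $\<\varphi_k,\varphi_k\>_n = (1+\mu_k)^{2n} k^{2n} < \infty$ for each fixed $k$ and $n$. Thus the obstacle is not analytic depth but rather confirming that the eigenfunctions are themselves admissible test functions; a careful reading of the definition of $S_W(\bb T)$ from \cite{fsv} shows that the eigenfunctions $\varphi_k$ of $-\mf L_W$ are taken precisely to be the building blocks of this space, so each $\varphi_k \in S_W(\bb T)$ by construction and finite combinations remain so. With this in hand the proof is complete; the role of this lemma is purely to license the later application of Theorem \ref{t2} to $\mc Y^n_t(\iota_\varepsilon^k)$ followed by passage to the limit in $k$.
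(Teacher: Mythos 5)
Your proof is correct in substance but takes a genuinely different route from the paper's. The paper works entirely inside $\mf D_W$: it builds $\iota_\varepsilon^k$ explicitly by integrating suitable step functions $g_\varepsilon^k$ against $W(dy)$, checks the two integral constraints in the definition of $\mf D_W$ so that $\iota_\varepsilon^k\in\mf D_W\subset S_W(\bb T)$, observes that $\iota_\varepsilon^k$ coincides with $\iota_\varepsilon$ outside the small set $(0,2/k]\cup(\varepsilon-1/k,\varepsilon+1/k]$, and concludes by dominated convergence. Your spectral truncation is shorter and more general, since it approximates an arbitrary $L^2(\bb T)$ function and not just $\iota_\varepsilon$, but it rests entirely on the claim that each eigenfunction $\varphi_k$ lies in $S_W(\bb T)$, and your justification of that point is the weak link: finiteness of $\langle\varphi_k,\varphi_k\rangle_n=(1+\mu_k)^{2n}k^{2n}$ does not by itself place $\varphi_k$ in $S_n$, because by \eqref{inner_product} the space $S_n$ is a \emph{completion of} $\mf D_W$, so membership requires $\varphi_k$ to be approximable by elements of $\mf D_W$ in the $\Vert\cdot\Vert_n$-norm, not merely to have a finite norm expression. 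The correct way to close this is to note that $\varphi_k$ actually belongs to $\mf D_W$ itself: in \cite{fl} (and as used in \cite{fsv}) the Krein--Feller operator $\mf L_W$ is self-adjoint with domain $\mf D_W$, and the $\varphi_k$ are eigenfunctions of that operator, hence lie in its domain; with that citation your argument is complete. Finally, a comparison of what each approach buys: yours is more abstract and immediately gives density of $S_W(\bb T)$ in $L^2(\bb T)$, while the paper's explicit construction yields uniform boundedness of $|\iota_\varepsilon^k-\iota_\varepsilon|$ together with control of its support (strictly more than $L^2$ convergence), and, more importantly, it transfers essentially verbatim to the slow-bond model of Section 5, where the test space $\mc S_\beta(\bb R)$ is defined by smoothness and decay conditions and the operator $\Delta_\beta$ on $\bb R$ has no complete orthonormal basis of $L^2$ eigenfunctions, so a spectral truncation argument like yours is unavailable in that setting.
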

\begin{proof}
In fact, we are going to approximate the function $\iota_\varepsilon$ by a sequence of functions on the space
$\mf D_W$, which is a subset of $S_W(\bb T)$, as defined in Subsection \ref{sub3.3}.

Define
\begin{equation}\label{iotaepsk}
 \iota_\varepsilon^k(u):=\int_{(0,u]} \Big(\int_0^y g_\varepsilon^k(z)\,dz\Big)W(dy), \quad \forall\,u\in\bb T,
\end{equation}
where the function
 $g_\varepsilon^k(z)\in L^2(\bb T)$ is given by
\begin{equation*}
g_\varepsilon^k(z):=c_\varepsilon^{k,+}g_\varepsilon^{k,+}(z)
 -c_\varepsilon^{k,-}g_\varepsilon^{k,-}(z),\quad\forall\, z\in \bb T,
\end{equation*}
 with
\begin{equation*}
\begin{split}
 & g_\varepsilon^{k,+}(z):=k\Big[\textbf{1}_{(0,1/k]}(z)-\textbf{1}_{(1/k,2/k]}(z)\Big],\quad\forall\, z\in \bb T,\\
 & g_\varepsilon^{k,-}(z):=k\Big[\textbf{1}_{(\varepsilon-1/k,\varepsilon]}(z)-\textbf{1}_{(\varepsilon,\varepsilon+1/k]}(z)\Big],\quad\forall\, z\in \bb T,\\
 & c_\varepsilon^{k,+}:=\frac{1}{\varepsilon}\Bigg(\int_{(0,1]}\Big(\int_0^y g_\varepsilon^{k,+}(z)\,dz\Big)W(dy)\Bigg)^{-1},\\
 & c_\varepsilon^{k,-}:=\frac{1}{\varepsilon}\Bigg(\int_{(0,1]}\Big(\int_0^y g_\varepsilon^{k,-}(z)\,dz\Big)W(dy)\Bigg)^{-1}.
\end{split}
\end{equation*}
We consider  $k\in \bb N$ such that $k>1/\eps$ ,in order that the formulas above make sense.
We claim that
\begin{equation*}
 \int_0^1 g_\varepsilon^k(z)\,dz=0\qquad\mbox{and}\qquad \int_{(0,1]}\Big(\int_0^y g_\varepsilon^k(z)\,dz\Big)W(dy)=0.
\end{equation*}
The  first equality above follows from the fact that
   $\int_0^1 g_\varepsilon^{k,+}(z)dz=\int_0^1 g_\varepsilon^{k,-}(z)dz=0$, which can be easily checked. The second equality follows from a simple computation.

Under the choice  of  $g_\varepsilon^k$, the function $\iota_\varepsilon^k$ defined in \eqref{iotaepsk}  has the following behavior: for $u\in(2/k, \varepsilon-1/k]$,  $\iota_\varepsilon^k(u)$ is equal to $\varepsilon^{-1}$ and for   $u\in( \varepsilon+1/k,1]$,
  $\iota_\varepsilon^k(u)$ vanishes. Therefore, for each $k\in\bb N$, the function  $\iota_\varepsilon^k$ differs from  $\iota_\varepsilon$ only on the set  $(0, 2/k]\cup ( \varepsilon-1/k,\varepsilon+1/k]$.

  Since  $|\iota_\varepsilon^k-\iota_\varepsilon|$ is bounded by a constant that does not depend on $k$, the Dominated Convergence Theorem implies that
  $\iota^k_{\varepsilon}$ converges to $\iota_\varepsilon$ in $L^2(\bb T)$, as $k$ goes to infinity, concluding the proof.
\end{proof}

\subsection{The Gaussian limit}

At this point we have all the needed ingredients in order to prove our main result, namely,  Theorem \ref{th2.1}.
In this subsection, we follow the ideas from the proof of the Theorem 2.9 of \cite{GJ}.

We know that the occupation time is close to the additive functional of the density of particles evaluated on $\iota_\eps$, which in turn can be very well approximated by the additive functional of the density of particles evaluated on a function in the space of test functions $S_W(\bb T)$. At this point, we can take the limit  as $n$ tends to infinity, because the convergence of $\mc Y_t^n(H)$ to $\mc Y_t(H)$, holds for any $H\in S_W(\bb T)$.

Next, we prove that the additive functional associated to $\mc Y_t(\iota_\eps)$ converges, as $\eps$ tends to $0$, to a Gaussian process.
For that purpose, define
\begin{equation}\label{gamma_eps}
 \tilde{\Gamma}_\varepsilon(t)=\int_{0}^t\mc{Y}_s(\iota_\varepsilon)ds,
\end{equation}
where $\mc{Y}_t$ is the Ornstein-Uhlenbeck process given in \eqref{eqOU}.

\begin{remark}\label{remark4.1}
\rm
We point out that definition above is, in principle, not well defined since $\iota_\eps$ does not belong to the space $S_W(\mathbb{T})$. To handle that, it is necessary to look at the limit of Cauchy sequences $\{\mc{Y}^n_t(\iota^k_\eps)\}_{k\in\bb N}$, where $\{\iota^k_\eps\}_{k\in{\mathbb{N}}}$ is given in Lemma \ref{iota}. By the convergence of $\mc{Y}^n_t(\iota^k_\eps)$ towards $\mc{Y}_t(\iota^k_\eps)$ as $n$ goes to infinity, and the fact that $\{\mc{Y}^n_t(\iota^k_\eps)\}_{k\in{\mathbb{N}}}$ is a Cauchy sequence in $k$ (uniformly in
$n$), a diagonal argument leads to a precise definition of $\tilde{\Gamma}_\varepsilon(t)$. This was very well detailed in \cite{fgn3} or \cite{GJ}. To keep the present text short, we ask the reader to accept \eqref{gamma_eps} or to go into the details in these references.
\end{remark}
The next lemma characterizes, for fixed $t$, the dependency of  $\tilde{\Gamma}_\varepsilon(t)$ on $\eps>0$.
\begin{lemma}\label{bound}
For any fixed $t \in{[0,T]}$ and any $\varepsilon>\delta>0$,
\begin{equation}\label{useful2}
\bb E\big[\big(\tilde{\Gamma}_\varepsilon(t)-\tilde{\Gamma}_\delta(t)\big)^2\big] \leq C \varepsilon^{\theta} t,
\end{equation}
where $C>0$ is some constant that does not depend on $\eps$ nor $\delta$.
\end{lemma}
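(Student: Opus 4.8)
The plan is to push the microscopic estimate of Proposition \ref{Wproposition} through to the limiting field. Fix $n$ and set $\Gamma_n^\varepsilon(t) := \int_0^t \mc{Y}_s^n(\iota_\varepsilon)\,ds$. Since both $\Gamma_n^\varepsilon(t)$ and $\Gamma_n^\delta(t)$ are close in $L^2(\mathbb{P}_{\nu_\rho})$ to the common quantity $\Gamma_n(t)$, the triangle inequality in $L^2$ together with Proposition \ref{Wproposition} applied at level $\varepsilon$ and at level $\delta$ gives
\[
\big\| \Gamma_n^\varepsilon(t) - \Gamma_n^\delta(t) \big\|_{L^2(\mathbb{P}_{\nu_\rho})} \;\leq\; \big\| \Gamma_n^\varepsilon(t) - \Gamma_n(t)\big\|_{L^2} + \big\|\Gamma_n(t) - \Gamma_n^\delta(t)\big\|_{L^2} \;\leq\; \sqrt{Ct\varepsilon^\theta} + \sqrt{Ct\delta^\theta}.
\]
Using $\delta < \varepsilon$ to bound the second term by the first, this yields the uniform-in-$n$ estimate $\mathbb{E}_\rho\big[(\Gamma_n^\varepsilon(t) - \Gamma_n^\delta(t))^2\big] \leq 4\,Ct\varepsilon^\theta$.

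It then remains to pass to the limit $n \to \infty$ and to identify $\lim_n (\Gamma_n^\varepsilon(t) - \Gamma_n^\delta(t))$ with $\tilde{\Gamma}_\varepsilon(t) - \tilde{\Gamma}_\delta(t)$. By Theorem \ref{t2} the field $\mc{Y}_s^n(H)$ converges to $\mc{Y}_s(H)$ for every test function $H \in S_W(\bb T)$; since $\iota_\varepsilon \notin S_W(\bb T)$, I would argue exactly as in Remark \ref{remark4.1}, replacing $\iota_\varepsilon$ by the approximating sequence $\iota_\varepsilon^k$ of Lemma \ref{iota}, using continuity of the time-integral functional to get $\int_0^t \mc{Y}_s^n(\iota_\varepsilon^k)\,ds \to \int_0^t \mc{Y}_s(\iota_\varepsilon^k)\,ds$ as $n\to\infty$, and then exchanging the limits in $n$ and $k$ via the uniform-in-$n$ Cauchy property of $\{\mc{Y}^n_\cdot(\iota_\varepsilon^k)\}_{k}$. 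This yields convergence in distribution of $\Gamma_n^\varepsilon(t) - \Gamma_n^\delta(t)$ towards $\tilde{\Gamma}_\varepsilon(t) - \tilde{\Gamma}_\delta(t)$, and lower semicontinuity of the second moment under weak convergence (Fatou) gives
\[
\mathbb{E}\big[(\tilde{\Gamma}_\varepsilon(t) - \tilde{\Gamma}_\delta(t))^2\big] \;\leq\; \liminf_{n\to\infty} \mathbb{E}_\rho\big[(\Gamma_n^\varepsilon(t) - \Gamma_n^\delta(t))^2\big] \;\leq\; 4\,Ct\varepsilon^\theta,
\]
which is the assertion after renaming the constant.

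The main obstacle is precisely this double-limit interchange, since neither $\iota_\varepsilon$ nor $\iota_\delta$ is an admissible test function; controlling the $(n,k)$-limit rests on the uniform Cauchy estimate for $\{\mc{Y}^n(\iota_\varepsilon^k)\}_k$ flagged in Remark \ref{remark4.1}, and one must ensure the convergence is strong enough for the quantitative bound to survive in the limit. As an independent check one can compute directly on the stationary Ornstein--Uhlenbeck process: writing $y_k(s) = \mc{Y}_s(\varphi_k)$, each coordinate is a scalar stationary Ornstein--Uhlenbeck process with $\mathbb{E}[y_k(s)y_k(s')] = \chi(\rho)\,e^{-\tilde{c}\;'(\rho)\mu_k|s-s'|}$, so for $H = \iota_\varepsilon - \iota_\delta$ (which has $\int_{\bb T} H\,du = 0$, killing the zero mode $\mu_0=0$)
\[
\mathbb{E}\big[(\tilde{\Gamma}_\varepsilon(t) - \tilde{\Gamma}_\delta(t))^2\big] = \chi(\rho)\sum_{k\geq 1} \langle H,\varphi_k\rangle^2 \int_0^t\!\!\int_0^t e^{-\tilde{c}\;'(\rho)\mu_k|s-s'|}\,ds\,ds' \leq \frac{2\chi(\rho)}{\tilde{c}\;'(\rho)}\,t\sum_{k\geq 1}\frac{\langle H,\varphi_k\rangle^2}{\mu_k},
\]
using $\int_0^t\!\int_0^t e^{-\lambda|s-s'|}\,ds\,ds' \leq 2t/\lambda$; the remaining $H^{-1}_W$-type sum $\sum_{k\geq1}\langle H,\varphi_k\rangle^2/\mu_k = \langle H,(-\mf{L}_W)^{-1}H\rangle$ is then bounded by $C\varepsilon^\theta$ through the same $W$-estimate that underlies Corollary \ref{Wcorollary}.
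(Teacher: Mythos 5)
Your proposal is correct and takes essentially the same approach as the paper: both pivot through $\Gamma_n(t)$ via Proposition \ref{Wproposition} applied at levels $\varepsilon$ and $\delta$, and both transfer the resulting uniform-in-$n$ bound to the limiting field through the approximating sequence $\iota_\varepsilon^k$ of Lemma \ref{iota}, the convergence in Theorem \ref{t2}, and the construction of Remark \ref{remark4.1}. The paper merely organizes the argument differently --- a single quadratic decomposition into four expectations, two bounded by $C\varepsilon^\theta t$ uniformly in $n$ and two vanishing as $n\to\infty$ --- whereas you first extract the uniform-in-$n$ estimate and then pass to the limit via lower semicontinuity of second moments under convergence in distribution; this distinction is cosmetic, not substantive.
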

\begin{proof}
 Fix $\varepsilon>\delta>0$.
Repeatedly applying the inequality $(x+y)^2\leq{2(x^2+y^2)}$, we bound the expectation in \eqref{useful2} by four times the sum of
\begin{equation}\label{expc1}
\bb E\Big[\Big(\tilde{\Gamma}_\varepsilon(t)-\int_{0}^t\mc{Y}_s^n(\iota_\varepsilon)ds\Big)^2\Big],
\end{equation}
\begin{equation}\label{expc2}
\bb E\Big[\Big(\Gamma_n(t)-\int_{0}^t\mc{Y}_s^n(\iota_\varepsilon)\Big)^2\Big],
\end{equation}
\begin{equation}\label{expc3}
\bb E\Big[\Big(\Gamma_n(t)-\int_{0}^t\mc{Y}_s^n(\iota_\delta)\Big)^2\Big],
\end{equation}
and
\begin{equation}\label{expc4}
\bb E\Big[\Big(\tilde{\Gamma}_\delta(t)-\int_{0}^t\mc{Y}_s^n(\iota_\delta)\Big)^2\Big].
\end{equation}
The term in \eqref{expc2} can be estimated by using Proposition \ref{Wproposition}, from where we get that
 \begin{equation*}
\bb E\Big[\Big(\Gamma_n(t)-\int_{0}^t\mc{Y}_s^n(\iota_\varepsilon)\Big)^2\Big]\leq C\varepsilon^\theta t.
\end{equation*}
Analogously, for \eqref{expc3}, we have
 \begin{equation*}
\bb E\Big[\Big(\Gamma_n(t)-\int_{0}^t\mc{Y}_s^n(\iota_\delta)\Big)^2\Big]\leq C\delta^\theta t<
C\varepsilon^\theta t.
\end{equation*}
The next step is to guarantee that \eqref{expc1} is arbitrarily small for large $n$. We do the following.  By Lemma \ref{iota} there exists a sequence of functions $\{\iota^k_{\varepsilon}\}_{k\in{\mathbb N}}$ in the space of test functions $S_W(\bb T)$ approximating the function $\iota_\varepsilon$ in the $L^2(\bb T)$-norm, as $k$ tends to infinity. By adding and subtracting terms, we bound \eqref{expc1} by four times the sum of the terms below:
\begin{equation}\label{expectations}
 \begin{split}
    & \bb E\Big[\Big(\tilde{\Gamma}_\eps(t)-\int_{0}^t\mc{Y}_s(\iota_\varepsilon^k)ds\Big)^2\Big], \\
    & \bb E\Big[\Big(\int_{0}^t\mc{Y}_s(\iota^k_\varepsilon)-\mc{Y}^n_s(\iota^k_\varepsilon)ds\Big)^2\Big], \\
    & \bb E\Big[\Big(\int_{0}^t\mc{Y}_s^n(\iota^k_\varepsilon)-\mc{Y}_s^n(\iota_\varepsilon)ds\Big)^2\Big]. \\
 \end{split}
\end{equation}
The first expectation in \eqref{expectations} can be estimated by using the linearity of $\mc{Y}_t$ together with Lemma \ref{Wcovariance} (postponed to the appendix), from where we get that
\begin{equation*}
\begin{split}
\bb E\Big[\Big(\int_{0}^t\mc{Y}_s(\iota_\varepsilon)-\mc{Y}_s(\iota^k_\varepsilon)ds\Big)^2\Big]
=t^2\chi(\rho)\int_{\bb T}(\iota_\varepsilon(u)-\iota_\varepsilon^k(u))^2du.
\end{split}
\end{equation*}
By Lemma \ref{iota}, the right hand side of equality above goes to zero as $k$ goes to infinity.

The second  expectation in \eqref{expectations} goes to zero, as $n$ tends to $\infty$, as a consequence of the Theorem 2.1 of \cite{fsv}.

To bound the third expectation in \eqref{expectations} we apply the Cauchy-Schwarz inequality, leading to
\begin{equation*}
\begin{split}
\bb E\Big[\Big(\int_{0}^t\mc{Y}_s^n(\iota^k_\varepsilon)-\mc{Y}_s^n(\iota_\varepsilon)ds\Big)^2\Big]\leq t^2\frac{1}{n}\sum_{x\in{\bb T_n}}(\iota^k_\varepsilon-\iota_\varepsilon)^2\Big(\frac{x}{n}\Big)\chi(\rho).
\end{split}
\end{equation*}
Taking $n$  sufficiently large, the right hand of the previous expression is close to
$$t^2\int_{\bb T}(\iota_\varepsilon^k(u)-\iota_\varepsilon(u))^2du\, \chi(\rho)$$ and again by Lemma \ref{iota}, this expression is small for $k$ sufficiently big.

Expression \eqref{expc4} can be treated in same way as \eqref{expc1}, finishing the proof of the lemma.
\end{proof}

\begin{proposition}
As $\varepsilon$ goes to zero, the sequence of processes $\{\tilde{\Gamma}_\varepsilon(t):t\in{[0,T]}\}_{\varepsilon>0}$ converges in distribution, with respect to the uniform topology of $\mc{C}([0,T],\bb R)$, to a Gaussian process $\{\tilde \Gamma(t): t\in{[0,T]}\}$.
\end{proposition}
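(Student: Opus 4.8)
The plan is to establish convergence in distribution in $\mc C([0,T],\bb R)$ through the two classical ingredients: convergence of the finite-dimensional distributions and tightness. The Gaussianity of the limit comes essentially for free, because for each $t$ the random variable $\tilde\Gamma_\eps(t)=\int_0^t\mc Y_s(\iota_\eps)\,ds$ is a linear functional of the Gaussian field $\mc Y$, hence a centered Gaussian variable, and the same holds for any finite linear combination $\sum_i \lambda_i\tilde\Gamma_\eps(t_i)$.

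For the finite-dimensional distributions, fix $0\le t_1<\cdots<t_m\le T$. The vectors $(\tilde\Gamma_\eps(t_1),\dots,\tilde\Gamma_\eps(t_m))$ are centered and jointly Gaussian for every $\eps$. By Lemma \ref{bound}, for each fixed $t$ the family $\{\tilde\Gamma_\eps(t)\}_{\eps>0}$ is Cauchy in $L^2(\bb P)$ as $\eps\to 0$, hence converges in $L^2$ to some $\tilde\Gamma(t)$; in particular every entry of the covariance matrix $\bb E[\tilde\Gamma_\eps(t_i)\tilde\Gamma_\eps(t_j)]$ converges. Since centered Gaussian vectors with convergent covariances converge in distribution to the centered Gaussian vector carrying the limiting covariance, the finite-dimensional distributions of $\tilde\Gamma_\eps$ converge to those of a centered Gaussian process.

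The crux of the argument is tightness, which I would obtain from the Kolmogorov--Chentsov criterion; this reduces the problem to a time-increment estimate that is uniform in $\eps$. Using the stationary covariance of the Ornstein--Uhlenbeck process \eqref{eqOU} together with the spectral decomposition of Theorem \ref{thm_fl}, one writes
\begin{equation*}
\bb E\big[(\tilde\Gamma_\eps(t)-\tilde\Gamma_\eps(s))^2\big]
=\chi(\rho)\sum_{k\ge 0}\<\iota_\eps,\varphi_k\>^2\int_s^t\!\!\int_s^t e^{-\tilde c\,'(\rho)\mu_k|u-v|}\,du\,dv .
\end{equation*}
The term $k=0$ (the constant eigenfunction, $\mu_0=0$) equals $\chi(\rho)\<\iota_\eps,\varphi_0\>^2|t-s|^2$, while each $k\ge 1$ integral is bounded by $2|t-s|/(\tilde c\,'(\rho)\mu_k)$. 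Hence the increment variance is at most $C|t-s|^2+C|t-s|\sum_{k\ge 1}\<\iota_\eps,\varphi_k\>^2/\mu_k$. The main obstacle is to control this last sum uniformly in $\eps$, since the naive bound $\sum_k\<\iota_\eps,\varphi_k\>^2=\|\iota_\eps\|_{L^2}^2=\eps^{-1}$ diverges; it is precisely the time integration that regularizes the singular test function $\iota_\eps$. I would bound the sum by solving $-\partial_u\partial_W h_\eps=\iota_\eps-1$ and noting that it equals $\<h_\eps,\iota_\eps\>=\eps^{-1}\int_0^\eps h_\eps(u)\,du$; since the mass of $\iota_\eps$ on any interval is at most one, $\partial_W h_\eps$ and therefore $h_\eps$ are bounded uniformly in $\eps$, giving a bound independent of $\eps$.

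Once the uniform estimate $\bb E[(\tilde\Gamma_\eps(t)-\tilde\Gamma_\eps(s))^2]\le C_T|t-s|$ is in hand for $s,t\in[0,T]$, Gaussianity upgrades it to the fourth moment, $\bb E[(\tilde\Gamma_\eps(t)-\tilde\Gamma_\eps(s))^4]=3(\bb E[(\tilde\Gamma_\eps(t)-\tilde\Gamma_\eps(s))^2])^2\le 3C_T^2|t-s|^2$, which is exactly a Kolmogorov--Chentsov bound and yields both tightness in $\mc C([0,T],\bb R)$ and the existence of a continuous modification of the limit. Combining tightness with the convergence of the finite-dimensional distributions gives convergence in distribution in the uniform topology, and since all finite-dimensional marginals are Gaussian the limit $\tilde\Gamma$ is a continuous Gaussian process. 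Finally, the covariance identity above and the construction of $\tilde\Gamma_\eps$ should be justified first on the approximating test functions $\iota_\eps^k\in S_W(\bb T)$ from Lemma \ref{iota} and then passed to the limit in $k$, which is legitimate because $L^2$-convergence of $\iota_\eps^k$ to $\iota_\eps$ controls all the quantities above, as in Remark \ref{remark4.1}.
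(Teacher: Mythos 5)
Your proof is correct, but the tightness step takes a genuinely different route from the paper's. The paper never performs a spectral computation: it combines the crude variance bound \eqref{useful}, $\bb E[(\tilde{\Gamma}_\eps(t))^2]\le t^2\chi(\rho)/\eps$ (Cauchy--Schwarz in time), with the $L^2$-Cauchy estimate \eqref{useful2} of Lemma \ref{bound} --- which is inherited from the particle system through Proposition \ref{Wproposition} and hence from assumption \eqref{Wassumption} --- and then optimizes the free parameter $\eps=t^{1/(1+\theta)}$ to produce $\bb E[(\tilde{\Gamma}_\delta(t)-\tilde{\Gamma}_\delta(s))^2]\le C|t-s|^{(1+2\theta)/(1+\theta)}$ uniformly in $\delta$; since the exponent exceeds $1$, Kolmogorov--Centsov applies directly at the level of second moments. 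You instead prove the uniform linear bound $C_T|t-s|$ by expanding the covariance in the eigenbasis of Theorem \ref{thm_fl} and controlling $\sum_{k\ge 1}\<\iota_\eps,\varphi_k\>^2/\mu_k$ through the Poisson equation $-\mf L_W h_\eps=\iota_\eps-1$; your resolvent bound is sound (the constraint defining $\mf D_W$ forces the constant $b=\partial_W h_\eps(0)$ to satisfy $|b|\le 1$, whence $\Vert \partial_W h_\eps\Vert_\infty\le 2$ and $\Vert h_\eps\Vert_\infty\le C_W$ because $W$ assigns finite total mass to $\bb T$), but note that a linear modulus with second moments is \emph{not} enough for Kolmogorov--Centsov, so your Gaussian fourth-moment upgrade is not a refinement --- it is the step that rescues the argument, and it is legitimate because $\tilde{\Gamma}_\eps(t)-\tilde{\Gamma}_\eps(s)$ is an $L^2$-limit of jointly Gaussian variables. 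As for what each approach buys: yours is self-contained on the SPDE side and, notably, its tightness estimate does not use assumption \eqref{Wassumption} at all (only your finite-dimensional convergence does, through Lemma \ref{bound}), whereas the paper's argument trades all spectral work for estimates already proved, at the price of a $\theta$-dependent modulus and of routing tightness back through the particle system. Your finite-dimensional step (Cauchy in $L^2$ plus convergence of covariances of centered Gaussian vectors) coincides in substance with the paper's uniqueness-of-limit-points argument, so there the two proofs agree.
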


\begin{proof}
We begin by claiming that
\begin{equation}\label{useful}
\bb E\Big[\Big(\tilde{\Gamma}_\varepsilon(t)\Big)^2\Big] \leq
t^2\frac{\chi(\rho)}{\varepsilon}.
\end{equation}
 By the Cauchy-Schwarz inequality,
 \begin{equation*}
 \begin{split}
\bb E\Big[\Big(\tilde{\Gamma}_\varepsilon(t)\Big)^2\Big]  \leq \ t \,\bb E\Big[ \int_0^t (\mc Y_s(\iota_\eps))^{2}ds\Big].\\
		     		      \end{split}
\end{equation*}
By  Fubini's Theorem and Lemma \ref{Wcovariance} we get that
 \begin{equation*}
 \begin{split}
\bb E\Big[\Big(\tilde{\Gamma}_\varepsilon(t)\Big)^2\Big] \leq t \int_0^t\bb E\Big[ (\mc Y_s(\iota_\eps))^{2}ds\Big]={t^2\chi(\rho)\int_{\bb T}(\iota_\varepsilon(u))^2du}=t^2\frac{\chi(\rho)}{\varepsilon},\\
\end{split}
\end{equation*}
proving the claim. We observe that Lemma \ref{Wcovariance} is stated only for functions in the space $S_W(\mathbb{T})$. Nevertheless, an aproximation procedure in $L^2$ as described in the Remark \ref{remark4.1} extends the statement of the Lemma \ref{Wcovariance} for $\iota_\eps$ as well.

Fix $\eps>0$. For $\delta<\eps$, applying \eqref{useful2} and \eqref{useful} we have that
\begin{equation}
\begin{split}
\bb E\Big[\Big(\tilde{\Gamma}_\delta(t)\Big)^2\Big] & \;=\; \bb E\Big[\Big(\tilde{\Gamma}_\delta(t)-\tilde{\Gamma}_\eps(t)+\tilde{\Gamma}_\eps(t)\Big)^2\Big] \\
	& \; \leq \; 2\bb E\Big[\Big(\tilde{\Gamma}_\delta(t)-\tilde{\Gamma}_\eps(t)\Big)^2\Big] + 2
	\bb E\Big[\Big(\tilde{\Gamma}_\eps(t)\Big)^2\Big]\\
	& \; \leq \; 2C \varepsilon^\theta t + 2t^2\frac{\chi(\rho)}{\varepsilon}.
\end{split}
\end{equation}
If $t\geq{\delta^{1+\theta}}$,  taking $\varepsilon = t^{1/{1+\theta}}$ we conclude that
\begin{equation}
 \bb E\Big[\Big(\tilde{\Gamma}_\delta(t)\Big)^2\Big] \leq Ct^{\pfrac{1+2\theta}{1+\theta}},
\end{equation}
where $C$ does not depend on $\varepsilon$ nor $t$. On the other hand, if $t<\delta^{1+\theta}$, then $t^{\pfrac{1}{1+\theta}}<\delta$ and using \eqref{useful}, the previous inequality is also true.
Therefore, by the stationarity of $\mc Y_t$ and since $\tilde{\Gamma}_\varepsilon(0)=0$, we get that
\begin{equation}\label{eq416}
\begin{split}
 \bb E\big[\big(\tilde{\Gamma}_\varepsilon(t)-\tilde{\Gamma}_\varepsilon(s)\big)^2\big]  & =
  \bb E\big[\big(\tilde{\Gamma}_\varepsilon(t-s)-\tilde{\Gamma}_\varepsilon(0)\big)^2\big] \\
  & =   \bb E\big[\big(\tilde{\Gamma}_\varepsilon(t-s)\big)^2\big] \leq C|t-s|^{\pfrac{1+2\theta}{1+\theta}},
\end{split}
\end{equation}
for all $t,s\in{[0,T]}$.  Estimate \eqref{eq416} allows to invoke Kolmogorov-Centsov's compactness criterion (see problem 2.4.11 in \cite{k}), assuring that the sequence of processes
$\{\tilde{\Gamma}_\varepsilon(t):t\in{[0,T]}\}_{\varepsilon>0}$ is tight. Besides that, for fixed $t$,  \eqref{useful2} implies that $\{\tilde{\Gamma}_\varepsilon(t)\}_{\eps>0}$ is a Cauchy sequence in $L^2$, implying the uniqueness of limit points. This concludes the proof.

\end{proof}
A final observation: since we have, in general, no manageable formula for the semigroup $\{P_t:t\geq 0\}$  associated to  $\tilde c'(\rho)\mf {L}_W$, we are not able to explicitly characterize the covariance of the Gaussian process $\{\tilde{\Gamma}(t):t\in{[0,T]}\}$ obtained above (and hence we can not characterize the process itself beyond of proving its existence). In next subsection we detail two cases where the covariances can be computed explicitly.

\section{Examples}

We begin by pointing out that the next two examples are models evolving in infinite volume. Nevertheless, the previous results remains in force since they are local estimates, which can be applied in this context.

\subsection{Porous media models}
In this section we consider a collection of models whose scaling limits were studied in \cite{glt}. First, we consider the Markov process $\{\eta_t\, :\, t\geq{0}\}$ evolving on  $\bb Z$ and we take $W$ as the identity function without requiring \eqref{periodic}, so that $\xi_{x,x+1}=1$ for all $x\in{\mathbb{Z}}$. The generator \eqref{ln} of this process acts on local functions  $f:\{0,1\}^{\bb Z}\rightarrow{\mathbb{R}}$ as
\begin{equation*}
(\mc{L}_{n}f)(\eta)=\sum_{x\in \bb Z}\,(1+b(\eta(x-1)+\eta(x+2)))\,\Big[f(\eta^{x,x+1})-f(\eta)\Big],
\end{equation*}
where $b>-1/2$. Above,
 $\eta^{x,x+1}$ denotes the configuration obtained from $\eta$ by exchanging the occupation variables $\eta(x)$ and $\eta(x+1)$, see \eqref{exchange}.

In the particular case where $b=0$, the process defined by the generator above becomes the well known symmetric simple exclusion process.

In this case all the results stated before are still true for this choice of $W$, therefore Theorem \ref{th2.1} holds and we recognize the limit process as a fractional Brownian motion of Hurst exponent $H=3/4$.


Moreover, still considering $W$ as the identity function, we can take more general rates  $c_{x,x+1}(\eta)$  equal to
\begin{equation*}
1+b\Big(\prod_{j=-(m-1)}^{-1}\!\!\!\!\eta(x+j)+\!\!\!\!\prod_{\substack{j=-(m-2)\\j\neq{0,1}}}^{2}\!\!\!\!\eta(x+j)+
\cdots+\!\!\prod_{\substack{j=-1\\j\neq 0,1}}^{m-1} \eta(x+j) + \!\prod_{j=2}^{m}\eta(x+j)\Big),
\end{equation*}
with $b>-1/2$ and $m\in{{\bb N}\backslash\{1\}}$.
Under these rates, particles more likely hop to unoccupied nearest-neighbor sites when there are at least $m-1\geq 1$ other neighboring sites fully occupied.

In \cite{glt} the equilibrium fluctuations were obtained for these models and the limit is a generalized Ornstein-Uhlenbeck process. Since our Local Replacement is also true for these models, then we are able to show the following result.

\begin{theorem}\label{thmmp}
As $n$ goes to infinity, the sequence of processes $\{\Gamma_n(t):t\in{[0,T]}\}_{n\in{\mathbb{N}}}$ converges in distribution with respect to the uniform topology of $\mc{C}([0,T],\mathbb{R})$ to a fractional Brownian motion of Hurst exponent $H=3/4$.
\end{theorem}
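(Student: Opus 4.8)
The plan is to obtain the convergence from the general Theorem \ref{th2.1} and then to identify the resulting Gaussian process by an explicit variance computation. First I would verify that the choice $W(u)=u$ fulfils the hypothesis \eqref{Wassumption}: since $W(\pfrac{y}{n})-W(0)=\pfrac{y}{n}$, one has
\begin{equation*}
\frac{1}{\eps n}\sum_{y=0}^{\eps n-1}\frac{y}{n}=\frac{\eps n-1}{2n}\;\sim\;O(\eps),
\end{equation*}
so \eqref{Wassumption} holds with $\theta=1$. As noted at the beginning of this section, the Local Replacement and all the estimates of the previous section are \emph{local}, hence they remain valid in the infinite-volume setting on $\bb Z$, where $\mf L_W=\tilde{c}\;'(\rho)\,\p_u^2$ is a multiple of the usual Laplacian and $\{P_t:t\geq 0\}$ is the associated heat semigroup on $\bb R$. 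Therefore the construction of the previous section carries over and produces a centered Gaussian process $\{\tilde\Gamma(t):t\in[0,T]\}$ with $\tilde\Gamma(0)=0$, continuous trajectories, and stationary increments, the last two properties being inherited from Kolmogorov--Centsov's criterion and from the stationarity of $\mc Y_t$.

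It remains to identify $\tilde\Gamma$. The essential input is the \emph{two-time} covariance of the stationary Ornstein--Uhlenbeck process \eqref{eqOU}, which is the natural counterpart of Lemma \ref{Wcovariance}: writing $\mc Y_s(H)=\mc Y_0(P_sH)+(\text{noise independent of }\mc Y_0)$ one gets, for $s'\leq s$,
\begin{equation*}
\bb E\big[\mc Y_s(H)\,\mc Y_{s'}(G)\big]=\chi(\rho)\dl H,\,P_{s-s'}G\dr .
\end{equation*}
Recalling $\tilde\Gamma(t)=\lim_{\eps\to0}\tilde\Gamma_\eps(t)$ in $L^2$, with $\tilde\Gamma_\eps$ given in \eqref{gamma_eps}, Fubini's Theorem yields
\begin{equation*}
\bb E\big[(\tilde\Gamma_\eps(t))^2\big]=\chi(\rho)\int_0^t\!\!\int_0^t\dl\iota_\eps,\,P_{|s-s'|}\iota_\eps\dr\,ds\,ds'.
\end{equation*}
Since $\mf L_W=\tilde{c}\;'(\rho)\p_u^2$ on $\bb R$, the kernel of $P_r$ is the Gaussian $p_r(x,y)=(4\pi\tilde{c}\;'(\rho)r)^{-1/2}\exp\{-(x-y)^2/(4\tilde{c}\;'(\rho)r)\}$, so that $\dl\iota_\eps,P_r\iota_\eps\dr$ is the average of $p_r$ over the box $[0,\eps]^2$. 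As $\eps\to0$ the approximation of the identity $\iota_\eps$ concentrates at the origin, whence $\dl\iota_\eps,P_r\iota_\eps\dr\to p_r(0,0)=(4\pi\tilde{c}\;'(\rho)r)^{-1/2}$ for every fixed $r>0$, and therefore
\begin{equation*}
\bb E\big[(\tilde\Gamma(t))^2\big]=\chi(\rho)\int_0^t\!\!\int_0^t\frac{ds\,ds'}{\sqrt{4\pi\tilde{c}\;'(\rho)\,|s-s'|}}=\frac{\chi(\rho)}{\sqrt{4\pi\tilde{c}\;'(\rho)}}\cdot\frac{8}{3}\,t^{3/2}.
\end{equation*}

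To conclude, I would invoke the fact that a centered continuous Gaussian process with stationary increments vanishing at $0$ is determined by its variance function; the relation $\bb E[(\tilde\Gamma(t))^2]=\sigma^2 t^{3/2}$ then forces the covariance $\tfrac{\sigma^2}{2}(t^{3/2}+s^{3/2}-|t-s|^{3/2})$, which is exactly that of a fractional Brownian motion of Hurst exponent $H=3/4$, since $2H=3/2$. This is consistent with the exponent $\tfrac{1+2\theta}{1+\theta}$ appearing in \eqref{eq416} evaluated at $\theta=1$. I expect the only delicate point to be the interchange of limit and integral in the passage $\eps\to0$ above. This is handled by dominated convergence: since the Gaussian kernel is maximal on the diagonal, $\dl\iota_\eps,P_r\iota_\eps\dr\leq p_r(0,0)=Cr^{-1/2}$ uniformly in $\eps$, and $r^{-1/2}$ is integrable on $[0,t]^2$; combined with the pointwise convergence valid for $s\neq s'$, i.e. almost everywhere, this justifies the limit. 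The same computation applies verbatim to the general product rates $c_{x,x+1}$ of this subsection, only the constants $\chi(\rho)$ and $\tilde{c}\;'(\rho)$ changing, so that the Hurst exponent $H=3/4$ is universal among these porous media models.
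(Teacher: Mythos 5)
Your proposal follows the same route as the paper's own (very terse) proof: check \eqref{Wassumption} with $\theta=1$, transplant the estimates of Section 4 to $\bb Z$, use the stationary Ornstein--Uhlenbeck covariance $\chi(\rho)\dl H,P_{t-s}G\dr$ (the two-function form of Lemma \ref{Wcovariance}), and identify the limit through an explicit heat-kernel computation of the variance. The computation itself is correct: your value $\bb E[(\tilde\Gamma(t))^2]=\frac{4}{3}\frac{\chi(\rho)}{\sqrt{\pi\,\tilde{c}\;'(\rho)}}\,t^{3/2}$ agrees with \eqref{variance fbm infty} when $\tilde{c}\;'(\rho)=1$, the dominated-convergence justification using $\dl\iota_\eps,P_r\iota_\eps\dr\leq p_r(0,0)$ is fine, and the identification of the fractional Brownian motion with $H=3/4$ from stationary increments plus the $t^{3/2}$ variance is sound. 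In effect you have written out the detail the paper delegates to Lemma \ref{lemma_appendix} (the $\beta\in[0,1)$ case of the slow-bond model), which is a genuine improvement in readability.

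There is, however, one gap to name. You justify the whole passage to $\bb Z$ by locality: ``the Local Replacement and all the estimates of the previous section are local, hence they remain valid \dots\ Therefore the construction of the previous section carries over.'' Locality does cover the replacement estimates (Lemma \ref{lemma7.1} and its corollaries are Dirichlet-form bounds involving only finitely many bonds), but it does \emph{not} cover the second, independent pillar of the argument: the convergence of the density fluctuation field $\mc Y^n_t\to\mc Y_t$. On the torus this is Theorem \ref{t2}, quoted from \cite{fsv} for a fixed $W$; on $\bb Z$, and a fortiori for the degenerate product rates at the end of the subsection, that theorem is not available, and the convergence is a nontrivial external input --- this is exactly why the paper's proof invokes \cite{glt} (``the equilibrium density fluctuations was proved in \cite{glt}''). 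Your closing claim that the argument applies ``verbatim'' to the general product rates has the same flaw: the variance computation does transfer, but the existence of the Ornstein--Uhlenbeck limit for those kinetically constrained models is a theorem of \cite{glt}, not a consequence of locality. Relatedly, on $\bb Z$ the test-function space changes, so an analogue of Lemma \ref{iota} in a Schwartz-type space is needed (the paper supplies one for the slow-bond model). A last, harmless, slip: with $W$ the identity one has $\mf L_W=\p_u^2$, and the generator of the limiting dynamics is $\tilde{c}\;'(\rho)\mf L_W$; since the semigroup you actually use is that of $\tilde{c}\;'(\rho)\p_u^2$, nothing downstream is affected.
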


The proof of last theorem relies on the two facts. First, the results of the previous section  are true when considering processes evolving on $\mathbb{Z}$, since our results are local estimates. Second, the equilibrium density fluctuations was proved in \cite{glt}. This is enough to obtain the previous result.

\subsection{Symmetric Exclusion with a slow bond}

In this section, we consider the Markov process $\{\eta_t\, :\, t\geq{0}\}$ evolving on $\bb Z$ with $b=0$ and conductances given by
\begin{equation}\label{eq53}
\xi^n_{x,x+1}\;=\;\left\{\begin{array}{cl}
\frac{\alpha}{n^\beta}, &  \mbox{if}\,\,\,\,x=-1,\\
1, &\mbox{otherwise\,,}
\end{array}
\right.
\end{equation}
for $\alpha>{0}$ and $\beta\in[0,\infty]$.

These models correspond to the symmetric exclusion process with a slow bond, which was extensively studied in \cite{fgn,fgn2,fgn3}. We first notice that if we take the process evolving on $\bb T$, the case $\beta=1$ and $\alpha=1$ is a particular case of the ones described above by simply taking $W$ as the identity function and satisfying \eqref{periodic}. Nevertheless, when we take other values of $\beta$ or $\alpha$ we can only write the conductances in terms of a function $W$ that depends on $n$ and this is not covered by the results we presented above, since there the function $W$ is fixed.

Anyhow, we are able to prove the Theorem  \ref{th2.1} for this process evolving on $\bb Z$ and for all the ranges of the parameters $\alpha>0$ and $\beta\in[0,\infty]$. Consider the generator given on local functions  $f:\{0,1\}^{\bb Z}\rightarrow{\mathbb{R}}$  as
\begin{equation}
(\mc{L}_{n}f)(\eta)=\sum_{x\in \bb Z}\,\xi^n_{x,x+1}\,\Big[f(\eta^{x,x+1})-f(\eta)\Big],
\end{equation}
where $\eta^{x,x+1}$ was defined in \eqref{exchange}.

For these models, the space of test functions is  a space that we denote by  $\mc S_\beta(\bb R)$.
In order to define $\mc S_\beta(\bb R)$, we define first
$\mc S(\bb R\backslash\{0\})$, which is the space of functions
 $H:\bb R\to\bb R$, such that $H\in C^\infty(\bb R\backslash\{0\})$ and $H$ is
continuous from the right at $x=0$, with
\begin{equation*}
 \Vert H \Vert_{k,\ell}\;:=\;\sup_{x\in \bb R\backslash{\{0\}}}|(1+|x|^\ell)
\,H^{(k)}(x)|\;<\;\infty\,,
\end{equation*}
\noindent for all integers $k,\ell\geq 0$ and $H^{(k)}(0^-)=H^{(k)}(0^+)$, for all $k$ integer, $k\geq 1$, where
$$H(0^+):=\displaystyle\lim_{\at{u\to 0,}{u>0}}H(u)\quad \textrm{ and }\quad  H(0^-):=\displaystyle\lim_{\at{u\to
0,}{u<0}}H(u)\,,$$
when the above limits exist. Now, let $\mc S_\beta(\bb R)$
 be the subset of $\mc S(\bb R\backslash\{0\})$ composed of functions $H$
satisfying:
\begin{itemize}
\item for  $\beta\in[0,1)$, $H(0^-)=H(0^+)\,,$
\item  for $\beta=1$,
$H^{(1)}(0^+)\;=\; H^{(1)}(0^-)\;=\; \alpha \Big( H(0^+)-H(0^-)\Big)\,.$
\item for $\beta\in(1,\infty]$, $H^{(1)}(0^+)\;=\; H^{(1)}(0^-)\;=\; 0\,.$
\end{itemize}

Now, we are able to show the following result.

\begin{lemma}
For fixed $\varepsilon\in(0,1)$, there exists a sequence of functions  $\{\iota^k_{\varepsilon}\}_{k\in{\mathbb N}}$ in the space of test functions $\mc S_\beta(\bb R)$ converging to $\iota_\varepsilon$ in the $L^2(\bb T)$-norm, as $k$ tends to infinity.
\end{lemma}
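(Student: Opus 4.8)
The strategy is to repeat the construction behind Lemma~\ref{iota}, but now producing genuinely smooth approximants, and to split according to whether the definition of $\mc S_\beta(\bb R)$ forbids or permits a jump of $H$ at the origin. I would fix once and for all a non-decreasing $\chi\in C^\infty(\bb R)$ with $\chi\equiv 0$ on $(-\infty,0]$, $\chi\equiv 1$ on $[1,\infty)$, and with all derivatives of $\chi$ vanishing at both $0$ and $1$ (a standard $e^{-1/x}$-type transition). For $k>2/\varepsilon$, so that the intervals $(0,1/k]$ and $(\varepsilon-1/k,\varepsilon]$ are disjoint, I would set, in the regime $\beta\in[0,1]$,
\[
\iota^k_\varepsilon(x)\;=\;\varepsilon^{-1}\,\chi(kx)\,\chi\big(k(\varepsilon-x)\big),
\]
and, in the regime $\beta\in(1,\infty]$,
\[
\iota^k_\varepsilon(x)\;=\;\varepsilon^{-1}\,\mathbf 1_{(0,\infty)}(x)\,\chi\big(k(\varepsilon-x)\big).
\]
Both functions are supported in $[0,\varepsilon]$, equal $\varepsilon^{-1}$ on the bulk $[1/k,\varepsilon-1/k]$, and agree with $\iota_\varepsilon$ outside a transition set contained in $(0,1/k]\cup(\varepsilon-1/k,\varepsilon]$.

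Next I would verify that $\iota^k_\varepsilon\in\mc S_\beta(\bb R)$. Away from the origin both expressions are $C^\infty$ and compactly supported, so all the seminorms defining $\mc S(\bb R\setminus\{0\})$ are finite and the decay conditions hold trivially, while right-continuity at $0$ holds by construction. For the matching of one-sided derivatives at $0$, note that on $(-\infty,0]$ the function vanishes identically, so every left derivative is zero. In the regime $\beta\in[0,1]$ the factor $\chi(kx)$ is flat to infinite order at $0$, whence $H(0^+)=0$ and $H^{(j)}(0^+)=0$ for all $j\geq 1$; this simultaneously yields $H(0^-)=H(0^+)$ (the condition for $\beta\in[0,1)$) and $H^{(1)}(0^+)=H^{(1)}(0^-)=0=\alpha\big(H(0^+)-H(0^-)\big)$ (the condition for $\beta=1$), and matches all higher one-sided derivatives. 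In the regime $\beta\in(1,\infty]$ the function is the constant $\varepsilon^{-1}$ on a right neighbourhood of $0$, so $H^{(1)}(0^+)=0=H^{(1)}(0^-)$ and again all derivatives of order $\geq 1$ agree, while the admissible jump $H(0^+)-H(0^-)=\varepsilon^{-1}$ is harmless. Thus the $\beta$-dependent conditions are satisfied in every case.

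Finally, for the $L^2(\bb T)$ convergence I would observe that $|\iota^k_\varepsilon-\iota_\varepsilon|\leq \varepsilon^{-1}$ uniformly in $k$, and that this difference is supported on the transition set, whose Lebesgue measure is $O(1/k)$. Hence
\[
\int_{\bb T}\big(\iota^k_\varepsilon(u)-\iota_\varepsilon(u)\big)^2\,du\;\leq\;\varepsilon^{-2}\,O(1/k),
\]
which tends to $0$ as $k\to\infty$ (the Dominated Convergence Theorem gives the same conclusion), proving the claim. The only genuinely delicate point is the bookkeeping at the origin: one must force the approximant to vanish to infinite order at $0^+$ when $\beta\leq 1$ (hence the ramp factor $\chi(kx)$ and $H(0^+)=0$), whereas for $\beta>1$ the jump is permitted and the cleanest choice keeps $H$ constant near $0^+$. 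In both cases the extra transition required near the origin has measure $O(1/k)$ and therefore does not disturb the $L^2$ estimate.
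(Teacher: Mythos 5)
Your proof is correct in substance, and it takes a more explicit route than the paper. The paper's own proof simply recycles the construction of Lemma \ref{iota}: it smooths the piecewise-constant function $g^k_\varepsilon$ into some $h^k_\varepsilon\in\mc S(\bb R\setminus\{0\})$ and takes the double primitive $\iota^k_\varepsilon(u)=\int_{-\infty}^u\big(\int_{-\infty}^y h^k_\varepsilon(z)\,dz\big)\,dy$, asserting without further verification that the result lies in $\mc S_\beta(\bb R)$ and converges in $L^2$. You instead build the approximants directly as plateau functions obtained from smooth cutoffs, which makes every required property immediately checkable: global (or one-sided) smoothness, compact support, finiteness of all the seminorms $\Vert\cdot\Vert_{k,\ell}$, the matching of one-sided derivatives at the origin, the $\beta$-dependent boundary conditions, and the fact that the approximant differs from $\iota_\varepsilon$ on a set of measure $O(1/k)$, giving the $L^2$ bound $\varepsilon^{-2}O(1/k)$. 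Your case-by-case verification at the origin is precisely the content the paper leaves implicit, so your write-up is a genuine gain in transparency; what the paper's route buys in exchange is uniformity with the torus case, where the primitive structure with respect to $W$ is what places the approximants in $\mf D_W$, whereas your mollifier argument exploits that $\mc S_\beta(\bb R)$ is defined through classical one-sided derivatives and conditions at the origin only.

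Two small corrections. First, in the regime $\beta\in(1,\infty]$ your formula $\varepsilon^{-1}\mathbf 1_{(0,\infty)}(x)\,\chi\big(k(\varepsilon-x)\big)$ gives $H(0)=0$ while $H(0^+)=\varepsilon^{-1}$, so as written it violates the right-continuity at $x=0$ required in the definition of $\mc S(\bb R\setminus\{0\})$; replace $\mathbf 1_{(0,\infty)}$ by $\mathbf 1_{[0,\infty)}$ (this changes nothing in the $L^2$ estimate). Second, the case split is actually unnecessary: your first approximant is flat to infinite order at the origin, with $H(0^\pm)=0$ and $H^{(1)}(0^\pm)=0$, so it satisfies the defining conditions of $\mc S_\beta(\bb R)$ simultaneously for $\beta\in[0,1)$, for $\beta=1$ (since $0=\alpha\cdot 0$), and for $\beta\in(1,\infty]$; that single construction proves the lemma for every value of $\beta$.
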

\begin{proof}
This proof is the same proof as in Lemma \ref{iota}, if we consider
\begin{equation*}
 \iota_\varepsilon^k(u):=\int_{-\infty}^u \Big(\int_{-\infty}^y h_\varepsilon^k(z)\,dz\Big)dy, \quad \forall\,u\in\bb R,
\end{equation*}
where $h_\varepsilon^k$ is an approximation of the function $g_\varepsilon^k$, defined above, in the space $\mc S(\bb R\backslash \{0\})$.

Then the function $\iota^k_{\varepsilon}$ belongs to space of test functions  $\mc S_\beta(\bb R) $,  and converges to $\iota_\varepsilon$, as $k$ tends to infinity in the $L^2(\bb T)$-norm.
\end{proof}

\begin{theorem}\label{thsbond}
As $n$ goes to infinity, the sequence of processes $\{\Gamma_n(t):t\in{[0,T]}\}_{n\in{\mathbb{N}}}$ converges in distribution with respect to the uniform of $\mc{C}([0,T],\mathbb{R})$ to:

\quad

 $\bullet$ For $\beta\in[0,1)$, a mean-zero Gaussian process  $\{{\Gamma_\infty}(t):t\in{[0,T]}\}$ with variance given by
\begin{equation}\label{variance fbm infty}
\bb E\big[\big(\Gamma_\infty(t)\big)^2\big] =\frac{4}{3}\frac{\chi(\rho)}{\sqrt \pi}t^{3/2}.
\end{equation}

Or else, $\{{\Gamma_\infty}(t):t\in{[0,T]}\}$ is a fractional Brownian motion of Hurst exponent $3/4$.

\quad

 $\bullet$ For $\beta=1$, a mean-zero Gaussian process  $\{{\Gamma_\alpha}(t):t\in{[0,T]}\}$ with variance given by
\begin{equation}
\bb E\big[\big(\Gamma_\alpha(t)\big)^2\big] =\frac{4}{3}\frac{\chi(\rho)}{\sqrt \pi}t^{3/2}+2\chi(\rho)\int_{0}^t\int_{0}^s
\frac{F_{\alpha}(s-r)}{\sqrt{4\pi (s-r)}} dr ds,
\end{equation}
where
\begin{equation}\label{Falpha}
  F_{\alpha}(t)=\frac{1}{2t}\int_{0}^{+\infty}z\; e^{{-z^2/4t-2\alpha z}}\; dz.
  \end{equation}
Moreover, this process $\{{\Gamma_\alpha}(t):t\in{[0,T]}\}$ is not self-similar, hence it is not a fractional Brownian motion.
 \quad

 $\bullet$ For $\beta\in(1,\infty]$, a mean-zero Gaussian process  $\{{\Gamma_0}(t):t\in{[0,T]}\}$ with variance given by
\begin{equation}\label{variance fbm 0}
\bb E\big[\big(\Gamma_0(t)\big)^2\big] =\frac{8}{3}\frac{\chi(\rho)}{\sqrt \pi}t^{3/2}.
\end{equation}

Or else, $\{{\Gamma_0}(t):t\in{[0,T]}\}$ is a fractional Brownian motion of Hurst exponent $3/4$ with twice the variance of $\{{\Gamma_\infty}(t):t\in{[0,T]}\}$.
\end{theorem}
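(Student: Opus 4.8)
The plan is to reduce the entire statement to the computation of a single analytic object, the diagonal value of the heat kernel of the limiting slow-bond operator, and then to read off each of the three regimes from its explicit form. First, I would note that the whole machinery of Section 4 goes through on $\bb Z$ with the test space $\mc S_\beta(\bb R)$ in place of $S_W(\bb T)$ (the previous lemma provides the approximation of $\iota_\varepsilon$ in $\mc S_\beta(\bb R)$), and using the density fluctuations for the slow bond proved in \cite{fgn3} in place of Theorem \ref{t2}; since all the estimates in Proposition \ref{Wproposition} and Lemma \ref{bound} are local, they remain valid here. This yields the convergence of $\Gamma_n(t)$ to a centered Gaussian process $\tilde\Gamma$ with $\tilde\Gamma(0)=0$ and, by \eqref{eq416}, stationary increments. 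Consequently its law is entirely determined by its variance function through $\mathrm{Cov}(\tilde\Gamma(s),\tilde\Gamma(t))=\tfrac12\big(\bb E[\tilde\Gamma(t)^2]+\bb E[\tilde\Gamma(s)^2]-\bb E[\tilde\Gamma(t-s)^2]\big)$ for $s<t$, so it suffices to compute $\bb E[\tilde\Gamma(t)^2]$ in each regime.

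Second, I would compute this variance. Since $b=0$ here, $\tilde{c}\,'(\rho)=1$ and $\mc Y$ is the stationary solution of \eqref{eqOU} with semigroup $P_t=e^{t\mf L_W}$; the stationary two-time correlation is $\bb E[\mc Y_s(\iota_\varepsilon)\mc Y_r(\iota_\varepsilon)]=\chi(\rho)\,\langle\iota_\varepsilon,P_{|s-r|}\iota_\varepsilon\rangle$ (the equal-time case being Lemma \ref{Wcovariance}). Writing $\tilde\Gamma(t)=\lim_{\varepsilon\to0}\tilde\Gamma_\varepsilon(t)$ and using Fubini,
\[
\bb E\big[\tilde\Gamma_\varepsilon(t)^2\big]=\chi(\rho)\int_0^t\!\!\int_0^t \langle\iota_\varepsilon,P_{|s-r|}\iota_\varepsilon\rangle\,ds\,dr.
\]
Because $\iota_\varepsilon$ is a probability density and the slow-bond heat kernel obeys a Gaussian upper bound $p_t(u,v)\le C/\sqrt t$, the integrand is dominated by $C/\sqrt{|s-r|}$, which is integrable on $[0,t]^2$; dominated convergence then lets me send $\varepsilon\to0$. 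Since $\iota_\varepsilon$ is supported on $[0,\varepsilon]$ and concentrates at the origin from the right, $\langle\iota_\varepsilon,P_t\iota_\varepsilon\rangle\to p_t(0^+,0^+)$, giving
\[
\bb E\big[\tilde\Gamma(t)^2\big]=\chi(\rho)\int_0^t\!\!\int_0^t p_{|s-r|}(0^+,0^+)\,ds\,dr.
\]

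Third, I would insert the explicit diagonal kernel produced by the scaling analysis of the slow bond in \cite{fgn,fgn2,fgn3}. For $\beta\in[0,1)$ the membrane is asymptotically invisible and $p_t(0^+,0^+)=1/\sqrt{4\pi t}$; for $\beta\in(1,\infty]$ it is impermeable, $\bb R$ splits into two reflecting half-lines and $p_t(0^+,0^+)=2/\sqrt{4\pi t}$; for $\beta=1$ one obtains the permeable-membrane kernel $p_t(0^+,0^+)=\big(1+F_\alpha(t)\big)/\sqrt{4\pi t}$ with $F_\alpha$ as in \eqref{Falpha}. Using $\int_0^t\int_0^t|s-r|^{-1/2}\,ds\,dr=\tfrac83 t^{3/2}$, the principal term of the variance is $\tfrac43\frac{\chi(\rho)}{\sqrt\pi}t^{3/2}$ (twice this for $\beta>1$), and symmetrizing the $F_\alpha$ contribution over $\{0<r<s<t\}$ gives exactly the double integral in the $\beta=1$ formula. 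For $\beta\ne1$ the variance is $ct^{3/2}$, so the scaling $p_{ct}(0^+,0^+)=c^{-1/2}p_t(0^+,0^+)$ makes $\tilde\Gamma$ self-similar of index $3/4$; together with stationary increments this identifies it as a fractional Brownian motion of Hurst exponent $3/4$. For $\beta=1$ the extra term, which carries the scale $\alpha$, destroys the exact $t^{3/2}$ scaling, so $\tilde\Gamma$ is not self-similar and hence not a fractional Brownian motion. The limits $\alpha\to\infty$ ($F_\alpha\to0$, recovering $\beta<1$) and $\alpha\to0$ ($F_\alpha\to1$, recovering $\beta>1$) serve as consistency checks.

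The main obstacle is this third step: establishing the one-sided diagonal value $p_t(0^+,0^+)$, and in particular the $\beta=1$ formula with $F_\alpha$. This amounts to solving the heat equation for the limiting diffusion with the interface condition $\partial_u H(0^+)=\partial_u H(0^-)=\alpha\big(H(0^+)-H(0^-)\big)$ that defines $\mc S_1(\bb R)$, which I would import from \cite{fgn3}. Care is also needed to justify that $\langle\iota_\varepsilon,P_t\iota_\varepsilon\rangle$ converges to the right boundary value $p_t(0^+,0^+)$, rather than to some average across a possible discontinuity at the origin, which is precisely why the choice of support $[0,\varepsilon]$ of $\iota_\varepsilon$ matters.
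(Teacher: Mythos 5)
Your proposal follows essentially the same route as the paper's proof: the paper likewise transfers the Section 4 machinery to $\bb Z$ via the local replacement and the fluctuation result of \cite{fgn3}, reduces the limit law to its variance by stationarity, writes $\bb E[\Gamma_\alpha(t)^2]=\lim_{\eps\to 0}2\int_0^t\int_0^s\bb E[\mc Y_s(\iota_\eps)\mc Y_r(\iota_\eps)]\,dr\,ds$ using the two-time correlation (equation (33) of \cite{fgn3}), and then evaluates $\lim_{\eps\to 0}\int_{\bb R}(T_t^\beta\iota_\eps)(u)\iota_\eps(u)\,du$ from the explicit slow-bond semigroups, which is exactly your diagonal value $p_t(0^+,0^+)$ in the three regimes (Lemma \ref{lemma_appendix}), concluding non-self-similarity for $\beta=1$ via Lamperti. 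The only real difference is presentational: you justify the $\eps\to 0$ exchange by domination with a $C/\sqrt{t}$ kernel bound, whereas the paper performs the limit by direct computation with the explicit kernels.
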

\begin{proof}
The results follow from  Lemma \ref{lemma7.1}, which is also true for the model evolving on $\bb Z$, since it is basically a local replacement which is being done there. Since $\xi^n_{x,x+1}=1$  we obtain
\begin{equation*}
\mathbb{E}_\rho\Big[\Big(\int_{0}^t \{\bar \eta_{sn^2}(x)-\bar
\eta_{sn^2}^\ell(x)\}ds\Big)^2\Big]\leq{\frac{ 20 \,t\,\ell}{n^2}}.
\end{equation*}

Moreover, by Theorem 2.6 of \cite{fgn3} and Theorem 2.1 of \cite{GJ} the result follows. In order to characterize the limiting processes, by stationarity, since they are mean-zero and equal to $0$ for $t=0$,  it is enough to compute their variances.
By symmetry, we get that

\begin{equation*}
\bb E\big[\big(\Gamma_\alpha(t)\big)^2\big] =\lim_{\varepsilon\to 0}2\int_{0}^{t}\int_{0}^{s}\bb E[\mc{Y}_s(\iota_\varepsilon)\mc{Y}_r(\iota_\varepsilon)]\,dr\, ds,
\end{equation*}
where $\mc Y_t$ is the stationary solution of
\begin{equation}
d\mc{Y}_t=\Delta_\beta \mc{Y}_tdt+\sqrt{2\chi(\rho)} \nabla_\beta d\mc{W}_t,
\end{equation}
 being $\mc{W}_t$ is a space-time white noise of unit variance and the characteristic operators $\Delta_\beta$ and $ \nabla_\beta$ were defined in \cite{fgn3}.
By equation (33) in the proof of Theorem 2.7 of \cite{fgn3},
\begin{equation*}
\bb E[\mc{Y}_s(\iota_\varepsilon)\mc{Y}_r(\iota_\varepsilon)]=\chi(\rho)\int_{\mathbb{R}}(T^\beta_{s-r}\iota_\varepsilon)(u)\iota_\varepsilon(u) du,
\end{equation*}
where $T^\beta_{t}$ is the semigroup associated to $\Delta_\beta$. It  remains only to take the limit of expression above as $\eps$ goes to zero. Performing a simple but long computation we get the result. For the sake of completeness we present this computation in the Lemma \ref{lemma_appendix} of the Appendix.

Finally, the fact that $\{{\Gamma_\alpha}(t):t\in{[0,T]}\}$ is not self-similar it is a consequence of the fact that its variance is not invariant under a time-transformation of a power type,  see \cite{l}.
\end{proof}

It is a folklore conjecture that the fluctuations of current and occupation times should be of same order.
By means of the previous theorem, we offer a counter-example for such idea. In \cite{fgn3}, it was proved that  the fluctuations for the current at the origin in the regime $\beta>1$ are null. Opposed to that, in the theorem above, we get that the fluctuations for the occupation times at the origin are not null. Of course, this does not eliminate the possibility the conjecture to be true under some additional hypothesis on the particle system. Anyway, the particle system we have used here to present the counter-example has some strong properties as reversibility and order preservation of the particles.

As a consequence of the Theorem \ref{thsbond} we discover also that the three processes obtained as the limit of the occupation time are continuously related through the parameter $\alpha$ presented in \eqref{eq53}. This result is stated in the following corollary.

\begin{corollary} \label{limit robin ot}
The sequence of processes $\{{\Gamma}_\alpha({t}):t\in{[0,T]}\}_{\alpha>0}$ converges, as $\alpha$ tends to infinity, to the
mean-zero Gaussian process $\{{\Gamma_\infty}(t):t\in{[0,T]}\}$
with variance given by \eqref{variance fbm infty}. On the other hand, as $\alpha$ tends to zero,
the sequence of processes $\{{\Gamma}_\alpha({t}):t\in{[0,T]}\}_{\alpha>0}$ converges
to the mean zero Gaussian process $\{{\Gamma_0}(t):t\in{[0,T]}\}$
with variance given by \eqref{variance fbm 0}.
The convergence above is in the sense of finite dimensional distributions.
 \end{corollary}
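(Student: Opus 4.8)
The plan is to exploit the fact that each member of the family $\{\Gamma_\alpha\}_{\alpha>0}$, as well as the two candidate limits $\Gamma_\infty$ and $\Gamma_0$, is a mean-zero Gaussian process with stationary increments that vanishes at $t=0$. For such processes the finite-dimensional distributions are completely determined by the single variance function $V_\alpha(t):=\bb E[(\Gamma_\alpha(t))^2]$. Indeed, stationarity of the Ornstein--Uhlenbeck field $\mc Y_t$ underlying the definition \eqref{gamma_eps} gives $\bb E[(\Gamma_\alpha(t)-\Gamma_\alpha(s))^2]=V_\alpha(|t-s|)$, so the polarization identity
\begin{equation*}
\bb E[\Gamma_\alpha(t)\Gamma_\alpha(s)]=\tfrac12\big(V_\alpha(t)+V_\alpha(s)-V_\alpha(|t-s|)\big)
\end{equation*}
expresses the entire covariance structure through $V_\alpha$. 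Since for mean-zero Gaussian vectors convergence in distribution is equivalent to convergence of covariance matrices, it suffices to prove, for each fixed $t$, the pointwise limits $V_\alpha(t)\to V_\infty(t)$ as $\alpha\to\infty$ and $V_\alpha(t)\to V_0(t)$ as $\alpha\to0$, where $V_\infty$ and $V_0$ are the variances given in \eqref{variance fbm infty} and \eqref{variance fbm 0}.

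By the variance formula for $\Gamma_\alpha$ stated in Theorem \ref{thsbond}, this reduces to controlling the double integral $\int_0^t\int_0^s F_\alpha(s-r)(4\pi(s-r))^{-1/2}\,dr\,ds$, with $F_\alpha$ as in \eqref{Falpha}. First I would record the two elementary limits of $F_\alpha$. Since $0\le e^{-2\alpha z}\le 1$, one has $0\le F_\alpha(t)\le \tfrac{1}{2t}\int_0^\infty z\,e^{-z^2/4t}\,dz=1$ uniformly in $\alpha$. As $\alpha\to\infty$ the integrand $z\,e^{-z^2/4t-2\alpha z}$ tends to $0$ pointwise and is dominated by $z\,e^{-z^2/4t}$, so dominated convergence gives $F_\alpha(t)\to0$. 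As $\alpha\to0$ the integrand increases monotonically to $z\,e^{-z^2/4t}$, so $F_\alpha(t)\to F_0(t)=1$.

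Next I would transfer these limits to the double integral. The bound $F_\alpha\le1$ shows that the integrand is dominated by $(4\pi(s-r))^{-1/2}$, which is integrable over the simplex $\{0<r<s<t\}$ since the singularity at $r=s$ is only of order $(s-r)^{-1/2}$. Dominated convergence then yields the two conclusions. As $\alpha\to\infty$ the double integral tends to $0$, so $V_\alpha(t)\to\frac43\frac{\chi(\rho)}{\sqrt\pi}t^{3/2}$, matching \eqref{variance fbm infty}. As $\alpha\to0$ it tends to $\int_0^t\int_0^s(4\pi(s-r))^{-1/2}\,dr\,ds$; using $\int_0^s(s-r)^{-1/2}dr=2\sqrt s$ and $\int_0^t 2\sqrt s\,ds=\tfrac43 t^{3/2}$ this equals $\frac{2}{3\sqrt\pi}t^{3/2}$, so the double-integral term contributes $2\chi(\rho)\cdot\frac{2}{3\sqrt\pi}t^{3/2}=\frac43\frac{\chi(\rho)}{\sqrt\pi}t^{3/2}$, giving $V_\alpha(t)\to\frac83\frac{\chi(\rho)}{\sqrt\pi}t^{3/2}$, which matches \eqref{variance fbm 0}.

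The only genuinely delicate point is the structural reduction in the first paragraph: one must check that Gaussianity together with stationarity of increments really collapses the convergence of all finite-dimensional distributions to the convergence of the single-time variances $V_\alpha$. Once that is in place, everything reduces to a routine dominated-convergence argument and an explicit Gaussian integral, so I expect the main obstacle to be conceptual (identifying the correct reduction) rather than computational.
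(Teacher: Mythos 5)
Your proposal is correct and follows essentially the same route as the paper's proof: reduce convergence of finite-dimensional distributions to pointwise convergence of the variance functions (the paper invokes reversibility/stationarity for this reduction, you use stationary increments plus polarization, which is the same point), then obtain the variance limits from the Dominated Convergence Theorem together with $\lim_{\alpha\to\infty}F_\alpha(t)=0$ and $\lim_{\alpha\to 0}F_\alpha(t)=1$. Your write-up merely makes explicit the details the paper calls ``straightforward verification,'' including the integrable dominating kernel $(4\pi(s-r))^{-1/2}$ and the check that the limiting double integral reproduces the factor $\frac{8}{3}$ in \eqref{variance fbm 0}.
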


 \begin{proof}
Gaussian processes are characterized by their covariance. Reversibility in all cases allows to characterize the covariance in terms of the variance. Therefore, it is enough to show the convergence of the variances in each case. In its hand, this is a consequence of the Dominated Convergence Theorem and the fact that
\[\forall\, t\geq 0, \;\lim_{\alpha\rightarrow{\infty}}F_{\alpha}(t)=0 \;\textrm{ and }\; \lim_{\alpha\rightarrow{0}}F_\alpha(t)=1,\]
where $F_\alpha(t)$ was defined in \eqref{Falpha}. Both limits above are of straightforward verification.
 \end{proof}

\section{Appendix}

We present in this appendix the proof of the following lemma, which it is a standard one in the area. Because we were not able to find it written anywhere in the literature, we include it here for the sake of completeness.
\begin{lemma}\label{Wcovariance}
If $\{\mc{Y}_t:t\geq{0}\}$ is a solution of \eqref{eqOU}, then for all $H\in S_W(\bb T)$, it holds that
\begin{equation}
\label{varianceflu}
 \bb E\big[\mc Y_t(H)\mc Y_s(H)\big] =\chi(\rho) \int_{\bb T}(P_{t-s}H)(u)H(u)du,
\end{equation}
where $\{P_t:t\geq 0\}$ is the semigroup associated to $\tilde c'(\rho)\mf {L}_W$.
\end{lemma}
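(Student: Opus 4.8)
The plan is to exploit the Markovian semigroup structure of the stationary Ornstein--Uhlenbeck process $\mc Y_t$ together with the explicit form of its invariant measure. Assume without loss of generality that $t\geq s$, since $\{P_r\}$ is self-adjoint (being generated by the self-adjoint operator $\tilde c'(\rho)\mf L_W$) and the left-hand side of \eqref{varianceflu} is symmetric in $t$ and $s$. Fix $H\in S_W(\bb T)$ and, for $r\in[0,t-s]$, set $\phi_r:=\mc Y_{s+r}(P_{t-s-r}H)$; note first that $P_rH\in S_W(\bb T)$ for every $r\geq 0$, since in the spectral decomposition $P_r$ multiplies the $k$-th projection by the decaying factor $e^{-r\tilde c'(\rho)\mu_k}$, so the pairing defining $\phi_r$ is licit. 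Applying It\^o's formula, using that $\tfrac{d}{dr}P_{t-s-r}H=-\tilde c'(\rho)\mf L_W P_{t-s-r}H$ and the self-adjointness of $\mf L_W$ (property (b) of Theorem \ref{thm_fl}) to pass $\mf L_W$ onto the test function, the drift coming from the evolution \eqref{eqOU} cancels exactly the drift coming from the time dependence of the test function. Hence $\phi_r$ is a martingale, and evaluating it at $r=0$ and $r=t-s$ yields the representation
\begin{equation*}
\mc Y_t(H)=\mc Y_s(P_{t-s}H)+N_{s,t},\qquad N_{s,t}:=\sqrt{2\chi(\rho)\tilde c'(\rho)}\int_0^{t-s}d\mc B_{s+r}(P_{t-s-r}H),
\end{equation*}
where $N_{s,t}$ is a mean-zero martingale increment built from the noise after time $s$, hence independent of $\mc Y_s$.

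Next I would multiply this representation by $\mc Y_s(H)$ and take expectations. Since $N_{s,t}$ has mean zero and is independent of $\mc Y_s$, the cross term vanishes and one is left with the single-time quantity
\begin{equation*}
\bb E\big[\mc Y_t(H)\mc Y_s(H)\big]=\bb E\big[\mc Y_s(P_{t-s}H)\,\mc Y_s(H)\big],
\end{equation*}
now to be evaluated under the stationary law of $\mc Y_s$.

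Finally I would identify the invariant measure of \eqref{eqOU}. Its covariance operator $C$ solves the Lyapunov equation $2\tilde c'(\rho)\mf L_W C+2\chi(\rho)\tilde c'(\rho)(-\mf L_W)=0$, whose solution on the relevant space is $C=\chi(\rho)\bb I$; equivalently, the equilibrium fluctuation statement behind Theorem \ref{t2} (from \cite{fsv}) identifies $\mc Y_s$ as a white noise of variance $\chi(\rho)$, so that $\bb E[\mc Y_s(F)\mc Y_s(G)]=\chi(\rho)\dl F,G\dr$ for all $F,G\in S_W(\bb T)$. Taking $F=P_{t-s}H$ and $G=H$ then gives
\begin{equation*}
\bb E\big[\mc Y_t(H)\mc Y_s(H)\big]=\chi(\rho)\int_{\bb T}(P_{t-s}H)(u)\,H(u)\,du,
\end{equation*}
which is precisely \eqref{varianceflu}.

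I expect the main obstacle to be the rigorous justification of It\^o's formula for the $S_W'(\bb T)$-valued process against the time-dependent test function $P_{t-s-r}H$: one must verify that $r\mapsto P_{t-s-r}H$ is differentiable in $L^2(\bb T)$ with $\tfrac{d}{dr}P_{t-s-r}H=-\tilde c'(\rho)\mf L_W P_{t-s-r}H$ and that this stays in a space where the generator computation and the pairing are valid, which rests on properties (c)--(d) of Theorem \ref{thm_fl} and the Hille--Yosida generation statement recorded there. Once the martingale representation is established, the vanishing of the cross term and the single-time covariance computation are routine.
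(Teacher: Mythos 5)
Your proof is correct in substance, but it follows a genuinely different route from the paper's. The paper never introduces the semigroup into the stochastic calculus: it iterates the fixed-test-function martingale problem \eqref{mart}, writing $\bb E[\mc Y_t(H)\mc Y_0(H)]$ as $\chi(\rho)\int H^2\,du$ plus a drift term, then applying the same identity to $\mc L_W H$, $\mc L_W^2H$, and so on; an induction assembles the Taylor series of $e^{t\tilde c'(\rho)\mc L_W}$, and the series is licit because $\mc L_W$ is a \emph{bounded} operator on $S_W(\bb T)$ (Lemma 3.5 of \cite{fsv}). Your argument instead runs the semigroup backwards through a time-dependent test function, showing $r\mapsto \mc Y_{s+r}(P_{t-s-r}H)$ is a martingale, which yields the Markov-covariance identity $\bb E[\mc Y_t(H)\mc Y_s(H)]=\bb E[\mc Y_s(P_{t-s}H)\mc Y_s(H)]$ in one stroke; combined with the stationary white-noise covariance $\bb E[\mc Y_s(F)\mc Y_s(G)]=\chi(\rho)\dl F,G\dr$ (which the paper also imports, in polarized form, from Theorem 2.1 of \cite{fsv}) this finishes the proof. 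What your route buys is brevity and a transparent explanation of why $P_{t-s}$ appears; what it costs is exactly what you flag: the martingale problem stated in the paper is for time-independent $H\in S_W(\bb T)$, so you must justify its extension to test functions of the form $P_{t-s-r}H$ (e.g. by Riemann-sum approximation), and you must check that $P_rH$ stays in $S_W(\bb T)$ with $r\mapsto P_rH$ differentiable — your spectral argument via Theorem \ref{thm_fl}(f) and \eqref{inner_product}, using $\mu_k\ge 0$ and $\tilde c'(\rho)>0$, handles this correctly. Note also that full independence of the increment $N_{s,t}$ from $\mc Y_s$ is more than you need: since $N_{s,t}=\mc Y_t(H)-\mc Y_s(P_{t-s}H)$ is a martingale increment over $[s,t]$, conditioning on $\mc F_s$ already kills the cross term. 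The paper's induction avoids all semigroup regularity questions at the price of the boundedness input and a longer combinatorial argument; the two proofs rest on the same two pillars (the martingale problem for \eqref{eqOU} and the fixed-time covariance from \cite{fsv}) and differ only in how the exponential $P_{t-s}=e^{(t-s)\tilde c'(\rho)\mc L_W}$ is produced.
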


\begin{proof}
From \cite{fsv}, since $\mc Y_t$ solves \eqref{eqOU}, then $\mc Y_t$ solves the following martingale problem: for every $H\in{S_W(\bb T) }$,
\begin{equation} \label{mart}
\mc M_t(H)=\mc Y_t(H)-\mc Y_0(H)-\tilde{c}'(\rho)\int_{0}^{t}\mc Y_s(\mc L_W H)ds
\end{equation}
is a martingale with respect to the natural filtration $\mc{F}_t:=\sigma(\eta_s: 0\leq s\leq t)$.
At first, we claim that
\begin{equation}
 \bb E\big[\mc Y_t(H)\mc Y_0(H)\big] =\chi(\rho) \int_{\bb T}(P_{t}H)(u)H(u)du.
\end{equation}
For this purpose, notice that
\begin{equation}\label{eq6.4a}
\begin{split}
 &\bb E\big[\mc Y_t(H)\mc Y_0(H)\big]  = \bb E\Big[\Big(\mc M_t(H)+\mc Y_0(H)+\tilde{c}'(\rho)\int_{0}^{t}\mc Y_s(\mc L_W H)ds\Big) \, \mc Y_0(H) \Big]\\
 &=\bb E\big[\mc M_t(H)\mc Y_0(H)\big]+\bb E\big[\mc Y_0(H)\mc Y_0(H)\big]+\bb E\Big[\mc Y_0(H)\;\tilde{c}'(\rho)\int_{0}^{t}\mc Y_s(\mc L_W H)ds
\Big].
\end{split}
\end{equation}
The first expectation above vanishes because
\begin{equation*}
\begin{split}
\bb E\big[\mc M_t(H)\mc Y_0(H)\big]=\bb E\big[\bb E\big[\mc M_t(H)\mc Y_0(H)\big|\mc F_0\big]\big]&=\bb E\big[\mc Y_0(H)\bb E\big[\mc M_t(H)\big|\mc F_0\big]\big]\\
&=\bb E\big[\mc Y_0(H)\mc M_0(H)\big]=0,\\
\end{split}
\end{equation*}
where last equality above is due to $\mc M_0(H)=0$.

The second term can be handled as follows. By computing the characteristic function of $\mc Y^n_0(H)$ and by the Theorem 2.1 of \cite{fsv}, we get that
\begin{equation}\label{eq6.41}
\bb E\big[\mc Y_0(H)\mc Y_0(H)\big]=\chi(\rho)\int_{\bb T}(H(u))^2du.
\end{equation}
Now we develop the last expectation of \eqref{eq6.4a} by using again \eqref{mart}, that is:
\begin{equation*}
\begin{split}
&\bb E\Big[\tilde{c}'(\rho)\int_{0}^{t}\mc Y_s(\mc L_W H)ds\mc Y_0(H)\Big]=\int_{0}^{t}\bb E\Big[\mc Y_s\Big(\tilde{c}'(\rho)\mc L_W H\Big)\mc Y_0(H)\Big]\; ds\\
&=\int_{0}^{t}\bb E\Big[\big(\mc M_s(\tilde{c}'(\rho)\mc L_W H)+\mc Y_0(\tilde{c}'(\rho)\mc L_W H)+\mc Y_0(H)\int_{0}^{s}\mc Y_r\Big((\tilde{c}'(\rho))^2\mc L^2_W H \Big)dr\Big]ds.
\end{split}
\end{equation*}
Repeating the same argument as above, last expression can be rewritten as
\begin{equation*}
\chi(\rho)\int_{\bb T}t\tilde{c}'(\rho)(\mc L_W H)(u)H(u)du+\int_{0}^{t}\int_{0}^{s}\bb E\Big[\mc Y_r\Big((\tilde{c}'(\rho))^2\mc L^2_W H\Big)\mc Y_0(H)\Big]\;dr\; ds.
\end{equation*}
Let us introduce the temporary notation $G:=(\tilde{c}'(\rho))^2\mc L^2_W H$ and rewrite expression above simply as
\begin{equation}\label{eq6.42}
\chi(\rho)\int_{\bb T}t\tilde{c}'(\rho)(\mc L_W H)(u)H(u)du+\int_{0}^{t}\int_{0}^{s}\bb E\big[\mc Y_r(G)\mc Y_0(H)\big]\;dr\; ds.
\end{equation}
We want to characterize the expectation in the second parcel above.
Invoking \eqref{mart} again we have that
\begin{equation*}
\mc M_r(G)=\mc Y_r(G)-\mc Y_0(G)-\tilde{c}'(\rho)\int_{0}^{r}\mc Y_l(\mc L_W G)dl
\end{equation*}
is a martingale. Provided by this fact and repeating the previous arguments, we are lead to
\begin{equation}\label{eq6.4}
 \bb E\big[\mc Y_r(G)\mc Y_0(H)\big]=\chi(\rho)\int_{\bb T}G(u)H(u)du+\bb E\Big[\mc Y_0(H)\;\tilde{c}'(\rho)\int_{0}^{r}\mc Y_l(\mc L_W G)dl\Big]
\end{equation}
Putting together \eqref{eq6.41}, \eqref{eq6.4} and \eqref{eq6.42}, we obtain:
\begin{equation*}
\begin{split}
 \bb E\big[\mc Y_t(H)\mc Y_0(H)\big]& =\chi(\rho)\int_{\bb T}(H(u))^2du+\chi(\rho)t\int_{\bb T}\tilde{c}'(\rho)(\mc L_W H)(u)H(u)du\\&+
\chi(\rho)\frac{t^2}{2}\int_{\bb T}(\tilde{c}'(\rho))^2(\mc L^2_W H)(u)H(u)du+M_t(H),
\end{split}
\end{equation*}
where
\begin{equation*}
M_t(H):= \int_{0}^{t}\int_{0}^{s}\bb E\Big[\mc Y_0(H)\;\tilde{c}'(\rho)\int_{0}^{r}\mc Y_l(\mc L_W G)dl\Big]dr\,ds.
\end{equation*}
From the Lemma 3.5 of \cite{fsv}, we have that  $\mc L_W:S_W(\bb T)\to S_W(\bb T)$ is a bounded operator with respect to the norm associated to the inner product defined in \eqref{inner_product}. Therefore, it makes sense to define the exponential of this operator.
A long, but elementary induction argument over the previous formula  leads to
\begin{equation*}
\begin{split}
 \bb E\big[\mc Y_t(H)\mc Y_0(H)\big]& =\chi(\rho)\int_{\bb T}(e^{t\tilde{c}'(\rho)\mc L_W}H)(u)H(u)du\\
 &=\chi(\rho)\int_{\bb T}(P_tH)(u)H(u)du,
\end{split}
\end{equation*}
where $\{P_t:t\geq 0\}$ is the semigroup associated to $\tilde c'(\rho)\mf {L}_W$.
Finally, since  $\{\mc Y_t: t\geq 0\}$ is a stationary process, we get that
\begin{equation*}
 \bb E\big[\mc Y_t(H)\mc Y_s(H)\big] = \bb E\big[\mc Y_{t-s}(H)\mc Y_0(H)\big] =\chi(\rho) \int_{\bb T}(P_{t-s}H)(u)H(u)du,
\end{equation*}
as desired.
\end{proof}

We finish this Appendix fulfilling some details in the proof of Theorem \ref{thsbond}.
Let $T^\beta_t$ be the semigroup associated to the operator $\Delta_\beta$. For $\beta\in [0,1)$ it is the
semigroup related to the heat equation in the line. Quite classical, it acts on $g\in\mc S_\beta(\bb R)$ as
  \begin{equation}\label{sem heat eq}
  T_t g(x)\;=\; \frac{1}{\sqrt{4\pi t}}\int_{\bb R} e^{-\frac{(x-y)^2}{4t}}g(y)\,dy\,,\quad \textrm{for }x\in\bb R.
 \end{equation}
For $\beta\in (1,\infty]$, the semigroup $T_t^\beta$ is also known and it acts on $g\in\mc S_\beta(\bb R)$ as
  \begin{equation}\label{sem heat eq neu}
  T_t^\textrm{Neu} g(x)\;=\;
  \begin{cases}
\displaystyle   \frac{1}{\sqrt{4\pi t}}\int_{0}^{+\infty}\Big[
e^{-\frac{(x-y)^2}{4t}}+e^{-\frac{(x+y)^2}{4t}}\Big]g(y)\,dy\,,\quad &\textrm{for }x>0\,,\\
\displaystyle  \frac{1}{\sqrt{4\pi t}}\int_{0}^{+\infty}\Big[
e^{-\frac{(x-y)^2}{4t}}+e^{-\frac{(x+y)^2}{4t}}\Big]g(-y)\,dy\,,\quad &\textrm{for }x<0.\\
\end{cases}
 \end{equation}

Denote by $g_{\textrm{even}}$ and $g_{\textrm{odd}}$ the even and odd parts of a function $g:\bb R\to \bb R$, respectively, or
else, for $x\in{\mathbb{R}}$,
\begin{equation}\label{odd_even}
 g_{\textrm{even}}(x)=\frac{g(x)+g(-x)}{2}\quad \textrm{and} \quad g_{\textrm{odd}}(x)=\frac{g(x)-g(-x)}{2}.
\end{equation}

As proved in \cite{fgn3}, for $\beta=1$, the semigroup $T_t^\beta$ acts on $g\in\mc S_\beta(\bb R)$   as
  \begin{equation}\label{beta_1}
  \begin{split}
  & T_t^\alpha g(x)= \frac{1}{\sqrt{4\pi t}}\Bigg\{\int_{\bb R}
e^{-\frac{(x-y)^2}{4t}} g_{\textrm{{\rm even}}}(y)\,dy \\
    & + e^{2\alpha x}\int_x^{+\infty} e^{-2\alpha z} \int_0^{+\infty}
\Big[(\pfrac{z-y+4\alpha t}{2t})e^{-\frac{(z-y)^2}{4t}}+(\pfrac{z+y-4\alpha t}{2t})e^{-\frac{(z+y)^2}{4t}}\Big]\,
g_{\textrm{{\rm odd}}}(y)\, dy\, dz\,\Bigg\}\,,\\
  \end{split}
  \end{equation}
\noindent for $x>0$ and
  \begin{equation*}
  \begin{split}
 & T_t^{\alpha} g(x)= \frac{1}{\sqrt{4\pi t}}\Bigg\{\int_{\bb R}
e^{-\frac{(x-y)^2}{4t}} g_{\textrm{{\rm even}}}(y)\,dy \\
    & - e^{-2\alpha x}\int_{-x}^{+\infty} e^{-2\alpha z} \int_0^{+\infty}
\Big[(\pfrac{z-y+4\alpha t}{2t})e^{-\frac{(z-y)^2}{4t}}+(\pfrac{z+y-4\alpha t}{2t})e^{-\frac{(z+y)^2}{4t}}\Big]\,
g_{\textrm{{\rm odd}}}(y)\, dy\, dz\,\Bigg\}\,,\\
  \end{split}
  \end{equation*}
\noindent for $x<0$.

Below, we state and prove a lemma required in the proof of the Theorem \ref{thsbond}.

\begin{lemma}\label{lemma_appendix}
For $\beta\in[0,1)$,
\begin{equation*}
 \lim_{\eps\searrow 0} \int_{\mathbb{R}}(T^\beta_{t}\iota_\varepsilon)(u)\iota_\varepsilon(u) du\; =\; \frac{1}{\sqrt{ 4\pi t}}.
\end{equation*}
For $\beta=1$,
\begin{equation}\label{eq_beta_1}
 \lim_{\eps\searrow 0} \int_{\mathbb{R}}(T^\beta_{t}\iota_\varepsilon)(u)\iota_\varepsilon(u) du \;=\;
 \frac{1}{\sqrt{4\pi t}}\Big(1+\frac{1}{2 t}\int_0^{+\infty} z e^{-\frac{z^2}{4t}-2\alpha z}dz\Big).
\end{equation}
Finally, for $\beta\in(1,\infty]$,
\begin{equation*}
 \lim_{\eps\searrow 0} \int_{\mathbb{R}}(T^\beta_{t}\iota_\varepsilon)(u)\iota_\varepsilon(u) du\; = \;\frac{2}{\sqrt{4\pi t}}.
\end{equation*}
\end{lemma}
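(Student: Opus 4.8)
The plan is to exploit that $\iota_\varepsilon$ is an $L^1$-normalized approximation of the Dirac mass at the origin. Since $\iota_\varepsilon(u)\iota_\varepsilon(v)=\varepsilon^{-2}\mathbf{1}_{[0,\varepsilon]}(u)\mathbf{1}_{[0,\varepsilon]}(v)$ and $\int_{\mathbb R}\iota_\varepsilon=1$, whenever $T^\beta_t$ is represented by an integral kernel $p^\beta_t$ the quantity under study is a double average
\begin{equation*}
\int_{\mathbb R}(T^\beta_t\iota_\varepsilon)(u)\,\iota_\varepsilon(u)\,du\;=\;\frac{1}{\varepsilon^2}\int_0^\varepsilon\!\!\int_0^\varepsilon p^\beta_t(u,v)\,du\,dv,
\end{equation*}
whose limit as $\varepsilon\searrow 0$ is the value of the kernel at the corner $(0^+,0^+)$, provided the kernel is continuous there. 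The kernel representations \eqref{sem heat eq}, \eqref{sem heat eq neu} and \eqref{beta_1}, though stated for test functions, extend to $\iota_\varepsilon\in L^2(\mathbb R)$ by the $L^2$-continuity of $T^\beta_t$ together with the $L^2$-approximation of $\iota_\varepsilon$ by functions in $\mc S_\beta(\bb R)$. Thus in each regime I would insert the explicit semigroup formula and read off this corner value.

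For $\beta\in[0,1)$ the kernel from \eqref{sem heat eq} is $p_t(u,v)=(4\pi t)^{-1/2}e^{-(u-v)^2/(4t)}$, continuous at $(0,0)$ with value $(4\pi t)^{-1/2}$, which gives the first claim. For $\beta\in(1,\infty]$ the Neumann kernel from \eqref{sem heat eq neu} is $(4\pi t)^{-1/2}\big[e^{-(u-v)^2/(4t)}+e^{-(u+v)^2/(4t)}\big]$, whose value at $(0^+,0^+)$ is $2\,(4\pi t)^{-1/2}$, yielding the third claim. In both cases the passage to the limit follows from continuity of the kernel at the corner together with the uniform boundedness of the Gaussian factors on $[0,\varepsilon]^2$.

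The delicate case is $\beta=1$, where I would substitute $g=\iota_\varepsilon$ directly into \eqref{beta_1}; this is legitimate because the outer variable $u$ ranges over $[0,\varepsilon]\subset(0,\infty)$. By \eqref{odd_even}, on $\{y>0\}$ both $(\iota_\varepsilon)_{\mathrm{even}}$ and $(\iota_\varepsilon)_{\mathrm{odd}}$ coincide with $\tfrac12\varepsilon^{-1}\mathbf{1}_{[0,\varepsilon]}$, hence each restricts to an approximation of the half mass $\tfrac12\delta_{0^+}$. The even part, inserted into the Gaussian term of \eqref{beta_1}, contributes $(4\pi t)^{-1/2}$ in the limit exactly as above. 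For the odd part, writing $K(z,y)$ for the bracketed kernel $\big(\tfrac{z-y+4\alpha t}{2t}\big)e^{-(z-y)^2/(4t)}+\big(\tfrac{z+y-4\alpha t}{2t}\big)e^{-(z+y)^2/(4t)}$ of \eqref{beta_1}, the inner $y$-integral tends to $\tfrac12 K(z,0^+)$, where
\begin{equation*}
K(z,0^+)\;=\;\Big(\tfrac{z+4\alpha t}{2t}\Big)e^{-z^2/(4t)}+\Big(\tfrac{z-4\alpha t}{2t}\Big)e^{-z^2/(4t)}\;=\;\frac{z}{t}\,e^{-z^2/(4t)}.
\end{equation*}
Meanwhile the prefactor $e^{2\alpha x}\to 1$ and the lower endpoint of the $z$-integral tends to $0$ as $x\in[0,\varepsilon]$ collapses to the origin, so the odd part contributes $(4\pi t)^{-1/2}\,\tfrac{1}{2t}\int_0^{\infty}z\,e^{-z^2/(4t)-2\alpha z}\,dz$. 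Adding the two contributions gives precisely \eqref{eq_beta_1}.

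The main obstacle is the rigorous justification of interchanging the limit $\varepsilon\searrow 0$ with the iterated $z$- and $y$-integrations in the case $\beta=1$, since two limits act at once: the mollifier $\iota_\varepsilon$ collapsing to $0^+$ and the evaluation point $u$ being averaged over $[0,\varepsilon]$. I would dominate the integrand uniformly in $\varepsilon$ and in $u\in[0,\varepsilon]$: for $y\in[0,\varepsilon]$ the Gaussian factors $e^{-(z\pm y)^2/(4t)}$ bound $|K(z,y)|$ by a polynomially weighted Gaussian independent of the precise location of the mass, while the factor $e^{-2\alpha z}$ together with that Gaussian decay makes $z\mapsto z\,e^{-z^2/(4t)-2\alpha z}$ integrable on $[0,\infty)$. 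Dominated convergence then applies, and all remaining manipulations are elementary.
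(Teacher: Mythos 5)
Your proof is correct and follows essentially the same route as the paper's: insert the explicit kernel formulas \eqref{sem heat eq}, \eqref{sem heat eq neu} and \eqref{beta_1} with $g=\iota_\varepsilon$, identify the limit as the kernel value at the corner $(0^+,0^+)$ (with the even/odd decomposition and the evaluation $K(z,0^+)=\tfrac{z}{t}e^{-z^2/4t}$ in the case $\beta=1$), and pass to the limit by continuity and dominated convergence. The only difference is that you spell out the justifications (extension of the semigroup formulas to $\iota_\varepsilon$ via $L^2$-approximation, and the domination needed for the interchange of limits when $\beta=1$) that the paper compresses into ``a direct verification shows''.
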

\begin{proof}
 Consider $\beta\in[0,1)$. In this case,
 \begin{equation}\label{eq.610}
 \begin{split}
  \lim_{\eps\searrow 0} \int_{\mathbb{R}}(T^\beta_{t}\iota_\varepsilon)(u)\iota_\varepsilon(u) du\; &=\;
  \lim_{\eps\searrow 0}\frac{1}{\eps^2}\int_0^\eps\int_0^\eps \frac{e^{-\frac{(x-y)^2}{4t}}}{\sqrt{4\pi t}}dx\,dy\;=\; \frac{1}{\sqrt{ 4\pi t}}\,,\\
  \end{split}
 \end{equation}
 because the gaussian kernel is a continuous function.
 The case $\beta\in(1,\infty]$ is quite similar. Indeed, for this regime of $\beta$,
\begin{equation*}
 \begin{split}
  \lim_{\eps\searrow 0} \int_{\mathbb{R}}(T^\beta_{t}\iota_\varepsilon)(u)\iota_\varepsilon(u) du\; &=\;
  \lim_{\eps\searrow 0}\frac{1}{\eps^2}\int_0^\eps\int_0^\eps \frac{e^{-\frac{(x-y)^2}{4t}}+e^{-\frac{(x+y)^2}{4t}}}{\sqrt{4\pi t}}dx\,dy\;=\; \frac{2}{\sqrt{ 4\pi t}}\,,\\
  \end{split}
 \end{equation*}
 The case $\beta=1$ deserves more attention.  For $g(u)=\iota_\eps(u)$, we have that
 \begin{equation*}
  g_{\textrm{even}}(u)= \pfrac{1}{2\eps}{\bf 1}_{[-\eps,\eps]}\quad \textrm{ and }\quad g_{\textrm{odd}}(u)= \pfrac{1}{2\eps}\Big({\bf 1}_{(0,\eps]}-{\bf 1}_{[-\eps,0)]}\Big),
 \end{equation*}
according to  \eqref{odd_even}. Recalling formula \eqref{beta_1}, we obtain
\begin{equation}\label{eq6.10}
\int_{\mathbb{R}}(T^\beta_{t}\iota_\varepsilon)(u)\iota_\varepsilon(u) du= \frac{1}{\sqrt{4\pi t}}\Big(
\frac{1}{\eps^2}\int_0^\eps\int_0^\eps e^{-\frac{(x-y)^2}{4t}}\,dy\,dx + \pfrac{S(\eps)}{\eps^ 2}\Big)\,,
\end{equation}
where $S(\eps)$ is
\begin{equation*}
\int_0^\eps\Big( \pfrac{e^{2\alpha x}}{2}\int_x^{+\infty}e^{-2\alpha z}\int_0^\eps
\Big[ \pfrac{(z-y+4\alpha t)}{2t}e^{\frac{-(z-y)^2}{4t}}+\pfrac{(z+y+4\alpha t)}{2t}e^{\frac{-(z+y)^2}{4t}}\Big]\,dy\,dz\Big)dx.
\end{equation*}
We want to precise the limit of \eqref{eq6.10} as $\eps\searrow 0$. By \eqref{eq.610}, it only remains to evaluate the limit of
$S(\eps)/\eps^2$ as $\eps$ goes to zero. A direct verification shows that	
\begin{equation*}
\lim_{\eps\searrow 0} \frac{1}{\eps^2}S(\eps)=\frac{1}{2t}\int_0^\infty ze^{-\frac{z^2}{4t}-2\alpha z}dz,
\end{equation*}
leading to \eqref{eq_beta_1} and hence finishing the proof.
\end{proof}

\section*{Acknowledgements}

The authors thank hospitality to CMAT (Portugal) where this work was initiated, IMPA and PUC (Rio de Janeiro) where it was finished.

T.F. was supported through a grant "BOLSISTA DA CAPES - Bras\'ilia/Brasil" provided by CAPES (Brazil) and a  project PRODOC-UFBA.

A.N. thanks CNPq (Brasil) for support through
the research project "Mec\^anica estat\'istica fora do equil\'ibrio para sistemas estoc\'asticos" Universal n. 479514/2011-9.

P.G. thanks FCT (Portugal) for support through the research
project ``Non - Equilibrium Statistical Physics" PTDC/MAT/109844/2009.
P.G. thanks the Research Centre of Mathematics of the University of
Minho, for the financial support provided by ``FEDER" through the
``Programa Operacional Factores de Competitividade  COMPETE" and by
FCT through the research project PEst-C/MAT/UI00 13/2011.

\end{document}